\newtheorem{theorem}{Theorem}
\newtheorem{lemma}[theorem]{Lemma}
\newtheorem{remark}[theorem]{Remark}
\newcommand{\bv}{{\bf w}}
\newcommand{\bF}{{\bf F}}
\newcommand{\bbs}{\bar{\boldsymbol\sigma}}
\newcommand{\beu}{\bar e_u}
\newcommand{\teu}{\tilde \phi_u}
\newcommand{\bes}{\bar e_{\boldsymbol\sigma}}
\newcommand{\tes}{\bar \phi_{\boldsymbol\sigma}}
\newcommand{\bs}{{\boldsymbol\sigma}}
\newcommand{\bV}{{\bf W}}
\newcounter{bean}
\newcommand{\Ba}{\partial_t^{\alpha}}
\newcommand{\I}{\mathcal I}
\newcommand{\dop}{\partial_\tau^{\alpha}}
\newcommand{\dopm}{\partial_\tau^{-\alpha}}
\newcommand{\mdop}{\partial_\tau^{\alpha-1}}
\newcommand{\dbeu}{\bar {\cal E}_u }
\newcommand{\dteu}{\bar \Phi_u}
\newcommand{\dbes}{\bar {\cal E}_\bs}
\newcommand{\dtes}{\bar \Phi_\bs}
\title{A mixed FEM for a time-fractional Fokker-Planck model\thanks{The work of Naveed Ahmed has been funded through the GUST Internal Seed Grant, case number 8. The work of Samir Karaa was supported by Sultan Qaboos University under Grant IG/SCI/MATH/21/01.}}
\author{Samir Karaa\thanks{Department of Mathematics, Sultan Qaboos University, Al-Khod 123, Muscat, Oman, Email: skaraa@squ.edu.om}, Kassem Mustapha\thanks{School of Mathematics and Statistics, University of New South Wales, Sydney, Australia, Email: kassem.ahmad.mustapha@gmail.com}, Naveed Ahmed\thanks{Center for Applied Mathematics and Bioinformatics (CAMB), Department of Mathematics and Natural Sciences, Gulf University for Science and Technology, Hawally,  Kuwait, Email:ahmed.n@gust.edu.kw}}
\date{\today}
\begin{document}

\maketitle

\begin{abstract}
We propose and analyze a mixed finite element method for the spatial approximation of a time-fractional Fokker--Planck equation in a convex polyhedral domain, where the given driving force is a function of space. Taking into account the limited smoothing properties of the model, and considering an appropriate splitting of the errors, we employed a sequence of clever energy arguments to show optimal convergence rates with respect to both approximation properties and regularity results. In particular, error bounds for both primary and secondary variables are derived in $L^2$-norm for cases with smooth and nonsmooth initial data. We further investigate a fully implicit time-stepping scheme based on a convolution quadrature in time generated by the backward Euler method. Our main result provides pointwise-in-time optimal $L^2$-error estimates for the primary variable. Numerical examples are then presented to illustrate the theoretical contributions.
\end{abstract}
{\small{\bf Keywords.}
fractional Fokker--Planck equation, mixed finite element method, convolution quadrature, error analysis, smooth and nonsmooth initial data.
}
\\
{\small{\bf AMS Subject Classification.}
 65M60, 65M12, 65M15, 65M70 and 35S10}


\section{Introduction}
Consider a bounded convex polygonal domain $\Omega$ in $\mathbb{R}^2$ with a boundary $\partial \Omega$, and let $T>0$ be a fixed value. The primary objective of this paper is to develop and analyze a mixed finite element method (FEM) for solving the  time-fractional Fokker-Planck model problem \cite{Metzler1999,HLS-2010}: with $0<\alpha<1,$ 
\begin{equation}\label{main}
\partial_t u-\nabla\cdot\bigl(
	^R \partial_t^{1-\alpha} \kappa \nabla u
	-\bF\, ^R\partial_t^{1-\alpha}u\bigr) =0,\quad\mbox{ in }\Omega\times (0,T],
\end{equation}
subject to homogeneous Dirichlet boundary conditions, with $u(x,0)=u_0(x)$ at the initial time $t=0.$ Here $\partial_t = \partial/\partial t$, the time-independent diffusivity $\kappa \in L^\infty(\Omega)$ and bounded below by $c_0>0$. In \eqref{main},  the Riemann-Liouville fractional derivative $^R \partial_t^{1-\alpha}=\frac{d}{dt} \I^{\alpha}$, where the time-fractional integral 
\begin{equation*} 
\I^{\alpha}{\varphi}(t):=\int_0^t\omega_\alpha(t-s)\varphi(s)\,ds\quad\text{with} \quad
\omega_{\alpha}(t):=\frac{t^{\alpha-1}}{\Gamma(\alpha)}.
\end{equation*}
We assume that the time-independent driving force~$\bF$ and its divergence  $\nabla \cdot \bF$ are bounded in $\Omega$. Since $\bF$ is independent of~$t$, applying $\I^{1-\alpha}$ to ~\eqref{main} and using $\I^{1-\alpha}(^R\partial_t^{1-\alpha} u) =u$ 
yield the alternative formulation
\begin{equation}\label{a1}
\Ba u + {\cal L} u =0 \quad\mbox{ on }\Omega\times (0,T],
\end{equation}
where the second-order non-selfadjoint operator ${\cal L} u=-\nabla\cdot (\kappa \nabla u)+ \nabla\cdot (\bF u)$. In \eqref{a1}, $\Ba u(t):=\I^{1-\alpha}{\partial_t u}(t)$ is the
time-fractional Caputo derivative of order $\alpha$ ($0<\alpha<1$). As $\alpha$ approaches $1^-$, $\Ba u$ converges to $\partial_tu$, and consequently, problem \eqref{main} (equivalently \eqref{a1}) reduces to the standard Fokker-Planck equation. For a comprehensive analysis of the well-posedness of the model problem \eqref{main} with a general driving force $\bF$, we refer the reader to \cite{McLeanetal2019}.

Various authors have studied the numerical solution of \eqref{a1}, mostly for a
1D spatial domain and a smooth continuous solution (which is not practically the case). For instance, Deng \cite{Deng-2007} transferred \eqref{a1} to a system of fractional ordinary differential equations (ODEs) by discretizing the spatial derivatives, and then applied a predictor-corrector approach combined with the method of lines. Cao et al. \cite{CFH-2012} adopted a similar approach and solved the resulting system using a second-order backward Euler scheme. 
Chen et al. \cite{SLZA-2009} investigated three different implicit finite difference techniques, in each of which the diffusion term was approximated by the standard 
second-order difference approximation at the advanced time level. In contrast, Jiang \cite{Jiang-2015} established monotonicity properties of the numerical solutions obtained by using these schemes, and so showed that the time-stepping preserves the non-negativity of the solution. Jiang and Xu \cite{JiangXu} proposed a finite volume method, and Yang et al. \cite{Yangetal} a spectral collocation method. Fairweather et al.~\cite{FZYX-2015} developed an orthogonal spline collocation method in space combined with the backward Euler method in time. Saadmandi \cite{SDA-2012} studied a collocation method based on shifted Legendre polynomials in time and Sinc functions in space. Vong and Wang \cite{VW-2015} have analyzed a high-order, compact difference scheme. Cui \cite{Cui-2015} applied a high-order approximation for the time-fractional derivative combined with a compact exponential finite difference scheme. 

For problem \eqref{main} with a time-space dependent $\bF$, first-order backward Euler and second-order corrected Crank-Nicolson time-stepping schemes were studied in \cite{LMM-2016,HLS-2021} and \cite{Mustaphaatel2023}, respectively. The stability and error analyses were carried out under certain regularity assumptions on the given data excluding the case of nonsmooth initial data, this is in addition to some restrictions on the range of the fractional exponent $\alpha.$ For the spatial discretization, a continuous, piecewise-linear Galerkin FEM was considered in a series of papers, see the recent papers \cite{LMM-2018,McLeanMustapha2022} and related references therein. In \cite{McLeanMustapha2022}, based on novel energy arguments, $\alpha$-robust stability, and optimal convergence results were investigated for both cases of smooth and nonsmooth initial data.

In the special case $\bF \equiv  0$ in \eqref{main}, mixed FEM were studied only in \cite{ZCBLT2017,K-2018,K-2020,KP-2020}. In \cite{ZCBLT2017}, the authors established mixed finite element error analysis under higher (unpractical) regularity assumption on the continuous solution $u$. Subsequently, Karaa \cite{K-2018,K-2020} has exploited the inverse of two mixed operators and derived optimal error estimates allowing nonsmooth initial data. The studies included both semidiscrete and fully discrete schemes. Later on, an alternative approach was adopted in \cite{KP-2020} to handle the case of the time-dependent diffusivity coefficient. The error analyses in these papers are not applicable for the case of non-zero $\bF$, and extending it to cover this case is not feasible.

The main aim of this work is: (1) develop a spatial mixed FEM for problem \eqref{a1} and derive $\alpha$-robust (that is, remains valid as $\alpha$ approaches $1$) error estimates which are not only optimal in approximation property but also in regularity of the solution; (2) combine our mixed FEM with a convolution quadrature generated by the backward Euler method in time and show optimal error estimates for the primary variable. Our convergence analysis approach involves splitting the errors into two distinct components. Then, a suitable exploitation of the results~\cite{K-2018} and~\cite{K-2020}, and the use of a sequence of challenging energy arguments in combination with general weakly singular Gronwall inequalities  (see \cite[Theorem 3.1]{DixonMcKee} and  \cite[Theorem 6.1]{DixonMcKee} for the continuous version and discrete version, respectively) allow us to prove the required optimal error bounds for both smooth and nonsmooth initial data. 

The paper is organized as follows. In Section \ref{sec:WRT}, we introduce the necessary notations and gather some technical estimates involving fractional integrals. In Section \ref{sec:Mixed}, we define the semidiscrete mixed FEM for solving \eqref{a1}. Error estimates follow in Theorem~\ref{thm:mixed-sm} showing optimal convergence rates for both primary and flux variables. We combine our mixed FEM with a time-stepping scheme that is based on a convolution quadrature in time generated by the backward Euler method in Section \ref{MFEMC}. A pointwise-in-time optimal $L^2$-error estimate is proved in Theorem~\ref{thm:mixed-smd}. Finally, in Section \ref{Numerical}, numerical experiments are presented to confirm our theoretical findings.

\section {Preliminaries}\label{sec:WRT} 
We shall first introduce notation and recall some preliminary results. Let $(\cdot,\cdot)$ be the inner product in $L^2(\Omega)$ and $\|\cdot\|$ the induced norm. For a nonnegative integer $m$, the space $H^m(\Omega)$ denotes the standard Sobolev space with the usual norm $\|\cdot\|_{H^m(\Omega)}$. The norm $\|\cdot\|_{\dot H^r(\Omega)}$ in the (fractional-order) Sobolev
space $\dot H^r(\Omega)$ is defined in the usual way 
via the Dirichlet eigenfunctions of the self-adjoint elliptic operator $-\nabla\cdot(\kappa \nabla)$ on $\Omega$, see \cite{thomee2006}. Therein, for a constant $\kappa,$ it is proved 
that when $r$ is a nonnegative integer, $\dot H^r(\Omega)$ consists of all functions $v$ in $ H^r(\Omega)$ which satisfy the boundary conditions $\Delta^j v=0$ on $\partial \Omega$ for $j<r/2$,  and that the norms $\|\cdot\|_{\dot H^r(\Omega)}$ and $\|\cdot\|_r$ are equivalent.

Regularity properties of the solution $u$ of the time-fractional problem \eqref{main} play a key role in the error analysis of numerical methods, particularly, since $u$ has singularity near $t=0$, even for smooth given data. From \cite[Theorems 11, 12 and 13]{McLeanetal2020} or \cite[Theorem 3]{McLeanMustapha2022}, we recall the following regularity results: for $r\in [0,2]$, $u$ and its time derivative satisfy
\begin{equation}\label{eq:reg}
\|u(t)\|_2+t\|u'(t)\|_2 \le Ct^{-\alpha(2-r)/2}\|u_0\|_{\dot H^r(\Omega)},\quad t>0,
\end{equation} 
 where $'$ denotes the time partial derivative. Properties in \eqref{eq:reg} are used in the proofs of the estimates \eqref{es1-cd}, \eqref{es3-cd}, as well as in the proof of first and third estimates of Theorem \ref{thm:mixed-sm-f}. For the error bound of the secondary variable, to show a second order of accuracy, we require that 
\begin{equation}\label{eq:reg-2}
\|u(t)\|_3\le Ct^{-\alpha(3-r)/2}\|u_0\|_{\dot H^r(\Omega)},\quad r\in [1,2].
\end{equation} 
This additional assumption is used in \eqref{es2-cd} and in the proof of the second estimate of Theorem \ref{thm:mixed-sm-f}. When $\bF \equiv 0,$ \eqref{eq:reg-2} is proved in  \cite[Theorem 4.1]{McLean2010}.

Nevertheless, the above regularity estimate holds in the limiting case $\alpha=1$, when the model problem \eqref{main} reduces to the classical Fokker-Planck PDE; see \cite[Lemmas 3.2 and 4.4]{thomee2006}. In \eqref{eq:reg} and throughout the paper, $C$ denotes a generic positive constant that may depend on $T$, $\Omega$, $\kappa$, and $\bF$. 

We show next some properties of  $\I^{\alpha}$ for later use in the subsequent sections. 
\begin{lemma} If the function $\varphi \in L^2((0,t), L^2(\Omega))$, then
\begin{equation}\label{eq: nu 1}
\int_0^t \|\I^\alpha \varphi\|^2\,ds \le \omega_{\alpha+1}(t)\int_0^t \omega_\alpha(t-s)\ \I(\|\varphi\|^2)\, ds,\quad{\rm for~any}~~0< \alpha \le 1,
\end{equation} 
\begin{equation}\label{eq: nu mu}
\int_0^t \|\I^\alpha \varphi\|^2\,ds \le Ct^{2(\alpha-\mu)}\int_0^t \|\I^\mu \varphi\|^2\,ds,\quad{\rm for~any}~~0\le \mu\le \alpha \le 1,
\end{equation} 
\begin{equation}\label{step1tech}
\I^\alpha(\|\I\varphi\|^2)(t)\le \omega_{2-\nu}(t) \I^{1+\alpha-\nu}(\|\I^\nu\varphi\|^2)(t),\quad{\rm for~any}~~0\le \alpha,\nu \le 1\,.
\end{equation}
\end{lemma}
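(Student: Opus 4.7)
My plan is to prove all three inequalities by the same recipe: a Cauchy--Schwarz step on the convolution defining $\I^\alpha\varphi$, followed by the monotonicity of $s\mapsto \omega_\beta(s)$ for $\beta\ge 1$, and Fubini to swap the iterated integrals. The identity $\I^\alpha 1(s) = \omega_{\alpha+1}(s)$ and the semigroup property $\I^\alpha\I^\beta = \I^{\alpha+\beta}$ are the only additional ingredients.

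For \eqref{eq: nu 1}, I would first apply Cauchy--Schwarz in the measure $\omega_\alpha(s-r)\,dr$ to get the pointwise bound
\begin{equation*}
\|\I^\alpha\varphi(s)\|^2 \le \Bigl(\int_0^s\omega_\alpha(s-r)\,dr\Bigr)\int_0^s\omega_\alpha(s-r)\|\varphi(r)\|^2\,dr = \omega_{\alpha+1}(s)\int_0^s\omega_\alpha(s-r)\|\varphi(r)\|^2\,dr.
\end{equation*}
Integrating this over $s\in(0,t)$, bounding $\omega_{\alpha+1}(s)\le \omega_{\alpha+1}(t)$ (valid since $\alpha>0$ makes $\omega_{\alpha+1}$ increasing), and applying Fubini yields
\begin{equation*}
\int_0^t\|\I^\alpha\varphi\|^2\,ds \le \omega_{\alpha+1}(t)\,\I^\alpha\bigl(\I(\|\varphi\|^2)\bigr)(t),
\end{equation*}
which is exactly \eqref{eq: nu 1} after recognizing the right-hand side as $\omega_{\alpha+1}(t)\int_0^t\omega_\alpha(t-s)\I(\|\varphi\|^2)(s)\,ds$.

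For \eqref{eq: nu mu}, I would write $\I^\alpha\varphi = \I^{\alpha-\mu}(\I^\mu\varphi)$ and set $\psi = \I^\mu\varphi$, reducing the statement to the Young-type estimate
\begin{equation*}
\int_0^t\|\I^{\alpha-\mu}\psi(s)\|^2\,ds \le \omega_{\alpha-\mu+1}(t)^2\int_0^t\|\psi(s)\|^2\,ds.
\end{equation*}
This follows either from Young's convolution inequality with $\|\omega_{\alpha-\mu}\|_{L^1(0,t)} = \omega_{\alpha-\mu+1}(t)$, or directly by repeating the Cauchy--Schwarz argument of step (1) with $\alpha$ replaced by $\alpha-\mu$ and then integrating. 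The resulting factor $\omega_{\alpha-\mu+1}(t)^2 = t^{2(\alpha-\mu)}/\Gamma(\alpha-\mu+1)^2$ gives the claimed $Ct^{2(\alpha-\mu)}$ dependence (and the estimate is trivial when $\alpha=\mu$).

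For \eqref{step1tech}, I would set $\psi = \I^\nu\varphi$ so that $\I\varphi = \I^{1-\nu}\psi$, and apply the Cauchy--Schwarz bound from the proof of \eqref{eq: nu 1} with $\alpha$ replaced by $1-\nu$:
\begin{equation*}
\|\I\varphi(s)\|^2 \le \omega_{2-\nu}(s)\int_0^s\omega_{1-\nu}(s-r)\|\psi(r)\|^2\,dr.
\end{equation*}
Multiplying by $\omega_\alpha(t-s)$, using $\omega_{2-\nu}(s)\le \omega_{2-\nu}(t)$ (valid since $2-\nu\ge 1$), integrating over $s\in(0,t)$ and invoking the semigroup identity $\I^\alpha\I^{1-\nu} = \I^{1+\alpha-\nu}$ gives
\begin{equation*}
\I^\alpha(\|\I\varphi\|^2)(t) \le \omega_{2-\nu}(t)\,\I^{1+\alpha-\nu}(\|\I^\nu\varphi\|^2)(t),
\end{equation*}
which is \eqref{step1tech}. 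There is no genuine obstacle here; the only point requiring mild care is to check that the weight $\omega_\beta$ we pull out is nondecreasing, which in each case reduces to verifying that the relevant exponent ($\alpha+1$, $\alpha-\mu+1$, or $2-\nu$) is at least $1$ under the stated assumptions $0\le\mu\le\alpha\le 1$ and $0\le\nu\le 1$.
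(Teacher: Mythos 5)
Your proposal is correct: for \eqref{step1tech} it reproduces essentially the paper's own argument (Cauchy--Schwarz on $\I^{1-\nu}(\I^\nu\varphi)$, monotonicity of $\omega_{2-\nu}$, then the semigroup property under $\I^\alpha$), and for \eqref{eq: nu 1} and \eqref{eq: nu mu} — which the paper does not prove but delegates to \cite{LMM-2016} and \cite{LMM-2018} — your Cauchy--Schwarz/Fubini and Young-type arguments are the standard ones and are sound. No gaps.
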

\begin{proof}
The proof of \eqref{eq: nu 1} is in \cite[Lemma 2.3]{LMM-2016}.  For the proof of \eqref{eq: nu mu}, we refer to \cite[Lemma 3.1]{LMM-2018}. To show \eqref{step1tech},  we use first that $\I\varphi(t)=\I^{1-\nu}(\I^\nu\varphi)(t)$ for $0\le \nu \le 1,$ and notice that 
\[\|\I\varphi(t)\|=\|\I^{1-\nu}(\I^\nu\varphi)(t)\|\le \I^{1-\nu}(\|\I^\nu\varphi\|)(t)\,,\]
and hence, applying the Cauchy-Schwarz inequality and integrating, we obtain
\[
    \|\I\varphi(t)\|^2
\le \int_0^t\omega_{1-\nu}(t-s)\,ds\int_0^t\omega_{1-\nu}(t-s)\|\I^\nu\varphi(s)\|^2\,ds\\
=\omega_{2-\nu}(t) \I^{1-\nu}(\|\I^\nu\varphi\|^2)(t)\,.\]
Applying $\I^\alpha$ to both sides yields 
\[\I^\alpha(\|\I\varphi\|^2)(t)\le \omega_{2-\nu}(t) \I^\alpha\Big(\I^{1-\nu}(\|\I^\nu\varphi\|^2)\Big)(t)
= \omega_{2-\nu}(t) \I^{1+\alpha-\nu}(\|\I^\nu\varphi\|^2)(t),\]
and the proof of \eqref{step1tech} is completed. 
\end{proof} 

Before ending this section, we recall the following identity which follows from the generalized Leibniz formula: 
\begin{equation}\label{Leibniz-2}
\Ba(t\varphi)=t\Ba \varphi +\alpha\I^{1-\alpha}\varphi+t\omega_{1-\alpha}(t) \varphi(0)\,.
\end{equation}

\section {Mixed FEM} \label{sec:Mixed}
In this section, we consider the mixed form of problem \eqref{a1}. Introduce 
$\bs = - \kappa \nabla u+\bF u$ so that $\kappa^{-1} \bs +\nabla u=\beta u$ where $\beta=\kappa^{-1} \bF.$ Our model problem can now be formulated as
$$\Ba u+\nabla \cdot \bs=0, \quad u=0 \; \mbox{ on } \partial\Omega,\quad {\rm with}~~u(0)=u_0.$$
Let $H(div;\Omega)= \{\bv\in (L^2(\Omega))^2:\nabla\cdot\bv\in L^2(\Omega) \}$
be a Hilbert space equipped with the norm $\|\bv\|_{H(div;\Omega)} =(\|\bv\|^2+\|\nabla\cdot\bv\|
^2)^{\frac{1}{2}}$. Then, with $V=L^2(\Omega)$ and $\bV= H(div;\Omega)$, the mixed formulation of \eqref{main} is defined as follows: find $(u,\bs):(0,T]\to V\times \bV$ such that
\begin{eqnarray}\label{w1-m}
(\Ba u, v) + (\nabla\cdot \bs, v) &=&0 \quad \forall v \in V,\\
\label{w2-m}
(\kappa^{-1} \bs, \bv) - (u,\nabla\cdot \bv) &=& (\beta u, \bv)\quad \forall \bv \in \bV,
\end{eqnarray}
with $u(0)=u_0$. Note that the homogeneous Dirichlet boundary conditions of $u$ on $\partial\Omega$ are implicitly contained in \eqref{w2-m}. 

For the semidiscrete mixed formulation corresponding to \eqref{w1-m}-\eqref{w2-m}, consider a shape-regular partition ${\mathcal T}_h$ of the polygonal convex domain $\bar{\Omega}$ into triangles $K$, each with diameter $h_K$. Additionally, choose appropriate finite element subspaces $V_h$ and $\bV_h$ of $V$ and $\bV$, respectively, that satisfy the Ladyzenskaya-Babuska-Brezzi (LBB) condition. As an example, let's consider using the Raviart-Thomas (RT) spaces \cite{RT-1977} with an index $\ell\geq 0$, which are defined by:
$$
V_h=\{ v\in L^2(\Omega):\;v|_{K}\in P_{\ell}(K) \;\forall K\in {\mathcal T}_h\}
$$
and 
$$
\bV_h=\{ {\bf v} \in \bV:\;{\bf v}|_{K}\in RT_{\ell}(K) \;\forall K\in {\mathcal T}_h\},
$$
where $RT_{\ell}(K)=(P_{\ell}(K))^2+\boldsymbol xP_{\ell}(K),$ ${\ell}\geq 0$, and $P_{\ell}(K)$ is the space of polynomials of degree $\le \ell$ on $K$. For each of these mixed spaces, there exists a projection operator $\Pi_h:\bV \to \bV_h$ known as the Fortin projection, which satisfies the property $\nabla\cdot \Pi_h=P_h(\nabla\cdot)$, where $P_h:V\to V_h$ is the $L^2$-projection defined by
$(P_hv-v,v_h)=0$ for all $v_h\in V_h.$ Further, the following approximation properties hold:
\begin{equation}\label{eq:w9}
||{\bf w}-\Pi_h{\bf w}||\leq C h^r||\nabla\cdot{\bf w}||_{r-1}~~{\rm and}~~ \|v-P_hv\|\leq Ch^r\|v\|_r\; \forall 
~1\leq r\leq \ell+1.
\end{equation}
Additionally, it's worth noting that $\Pi_h$ and $P_h$ satisfy the following conditions:
\[
(\nabla\cdot(\Pi_h{\bf w}- {\bf w}),v_h)=0\; \forall v_h\in V_h\quad \text{and}\quad 
(P_hv-v, \nabla\cdot {\bf w}_h)=0 \;\forall {\bf w}_h \in \bV_h.
\]

For additional examples of these spaces, including Brezzi-Douglas-Marini spaces and Brezzi-Douglas-Fortin-Marini spaces, refer to \cite{BF-91}. Due to the limited smoothing properties of the continuous problem, we shall focus on the case where $\ell=1$, as high-order elements are not relevant, as discussed in \cite{K-2018}.

The corresponding semidiscrete mixed finite element approximation of problem \eqref{main} is to seek a pair $(u_h,\bs_h):(0,T]\to V_h\times \bV_h$ such that
\begin{eqnarray}\label{w1a-m}
(\Ba u_h, v_h)+ (\nabla\cdot \bs_h, v_h) &=& 0 \quad \forall v_h \in V_h,\\
\label{w2a-m}
(\kappa^{-1} \bs_h, \bv_h) - (u_h,\nabla\cdot \bv_h) &=& (\beta u_h, \bv_h) \quad \forall \bv_h \in \bV_h,
\end{eqnarray}
with $u_h(0)=u_{0h}$, where $u_{0h}$ an appropriate approximation of $u_0$ in $V_h$. 

As both $V_h$ and $\bV_h$ are finite-dimensional spaces, the system (\ref{w1a-m})-(\ref{w2a-m}) results in a system of time-fractional linear ODEs. Consequently, the theory developed for the linear system in \cite{KST-2006} guarantees the existence of a unique solution to the semidiscrete system.

To derive optimal errors from the above mixed finite element discretization for the case of smooth and nonsmooth initial data, we rely on the delicate energy argument approach. Since the problem has a limited smoothing property, integration in time with a $t$ type weight is an essential tool to provide optimal error estimates. This idea has been used in \cite{KMP2016} and \cite{Mustapha2017} to derive optimal error bounds of the standard continuous Galerkin method applied to time-fractional diffusion equations with Riemann--Liouville and Caputo derivatives, respectively. The approach in \cite{KMP2016,Mustapha2017} extends the argument in \cite{LR} for parabolic problems to the 
time-fractional case. A similar approach applied to mixed FEMs for parabolic problems has also been exploited in \cite{GP-2011}. For convenience, define $e_u=u_h-u$ and $e_\bs =\bs_h-\bs$, and introduce the intermediate solutions $(\bar u_h,\bbs_h):(0,T]\to V_h\times \bV_h$ defined by
\begin{eqnarray}\label{w1a-m-bar}
(\Ba \bar u_h, v_h)+ (\nabla\cdot \bbs_h, v_h) &=& 0
\quad \forall v_h \in V_h,\\
\label{w2a-m-bar}
(\kappa^{-1} \bbs_h, \bv_h) - (\bar u_h,\nabla\cdot \bv_h) &=& (\beta u, \bv_h) \quad \forall \bv_h \in \bV_h,
\end{eqnarray}
with $\bar u_h(0)=u_{0h}$. Setting $\teu = \bar u_h-u$ and $\tes = \bbs_h-\bs$, we split the errors $e_u=\teu+\beu$ and $e_\bs = \tes+\bes$, where $\beu := u_h-\bar u_h$ and $\bes :=\bs_h-\bbs_h$. Thus, from (\ref{w1-m})-(\ref{w2-m}) and (\ref{w1a-m-bar})-(\ref{w2a-m-bar}), we obtain 
\begin{eqnarray}\label{ee1}
(\Ba \teu, v_h)+ (\nabla\cdot \tes, v_h) &=& 0 \quad \forall v_h \in V_h,\\
\label{ee2}
( \kappa^{-1} \tes, \bv_h) - (\teu,\nabla\cdot \bv_h) &=& 0 \quad \forall \bv_h \in \bV_h.
\end{eqnarray}
For the proof of the next theorem, we refer to \cite[Theorems 5.4 and 5.6]{K-2018}. 
\begin{theorem} \label{thm:mixed-cd}
Let $u_0 \in \dot H^\delta(\Omega)$ with $\delta \in [0,2]$. Then, for $t>0$,
\begin{equation}\label{es1-cd}
 \|\teu(t)\|+t\|\teu'(t)\|\leq
 C h^r t^{-\alpha(r-\delta)/2}\|u_0\|_{\dot H^\delta(\Omega)}, \quad \mbox{for } \delta \in [0,2],
\end{equation}
with $r=1,2,$ and 
\begin{equation}\label{es2-cd}
\|\tes(t)\| \leq C h^2 t^{-\alpha(3-\delta)/2}\|u_0\|_{\dot H^\delta(\Omega)}, \quad \mbox{for } \delta \in [1,2],
\end{equation}
\begin{equation}\label{es3-cd}
\|\tes(t)\| \leq C h t^{-\alpha(2-\delta)/2}\|u_0\|_{\dot H^\delta(\Omega)}, \quad \mbox{for } \delta \in [0,1).
\end{equation}
\end{theorem}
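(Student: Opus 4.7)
The plan is to observe that the intermediate error equations \eqref{ee1}--\eqref{ee2} are, in fact, identical to the mixed-FEM error equations for the pure time-fractional diffusion problem analyzed in \cite{K-2018}. Indeed, the term $(\beta u,\bv_h)$ appearing on the right-hand side of both \eqref{w2-m} (restricted to $\bv_h\in\bV_h$) and \eqref{w2a-m-bar} cancels upon subtraction, so the driving force $\bF$ does not enter \eqref{ee1}--\eqref{ee2} at all. Consequently, $(\teu,\tes)$ is precisely what one would obtain from a standard mixed approximation of $\Ba u-\nabla\cdot(\kappa\nabla u)=0$ with initial datum $u_{0h}$, even though the continuous $u$ actually solves the full Fokker--Planck problem.

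Once this reduction is made, I would invoke \cite[Theorems 5.4 and 5.6]{K-2018} directly. Those results depend on the continuous solution only through regularity bounds of the form \eqref{eq:reg}--\eqref{eq:reg-2}, and these are exactly what is available here (as recalled in Section~\ref{sec:WRT}). To recall the underlying strategy: write $\teu=\theta+(P_hu-u)$ and $\tes=\zeta+(\Pi_h\bs-\bs)$, where $\theta:=\bar u_h-P_hu\in V_h$ and $\zeta:=\bbs_h-\Pi_h\bs\in\bV_h$; the commuting-diagram property $\nabla\cdot\Pi_h=P_h(\nabla\cdot)$ annihilates the projection part of the divergence in \eqref{ee1}, producing a clean coupled system for $(\theta,\zeta)$. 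Testing with suitably chosen $t$-weighted functions and combining the fractional-integral inequalities \eqref{eq: nu 1}--\eqref{step1tech} with the Dixon--McKee weakly singular Gronwall inequality yields the target $L^2$-bounds for the discrete fluctuations; the approximation properties \eqref{eq:w9} then handle the remaining projection pieces.

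The principal obstacle, and the source of the technical depth of the argument, is the derivation of the pointwise-in-time bound on $t\|\teu'(t)\|$ in \eqref{es1-cd} and of the second-order flux estimate \eqref{es2-cd} in the nonsmooth regime $\delta\in[0,1)$. A plain energy identity only yields time-averaged information, so one must work with $t$-weighted test functions and commute the weight past the Caputo derivative using the generalized Leibniz rule \eqref{Leibniz-2}. The singular contribution $t\,\omega_{1-\alpha}(t)\varphi(0)$ produced by that rule, together with the need to iterate the argument in order to close the gap between the regularity exponent $\delta$ and the approximation order $r$, is what makes the bookkeeping delicate; these technicalities are carried out in \cite{K-2018} and transfer unchanged to the present setting.
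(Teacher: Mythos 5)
Your proposal matches the paper's treatment: the paper gives no proof of Theorem~\ref{thm:mixed-cd} beyond the fact that the $(\beta u,\bv_h)$ terms cancel so that \eqref{ee1}--\eqref{ee2} carry no trace of $\bF$, followed by a citation of \cite[Theorems 5.4 and 5.6]{K-2018} fed by the regularity bounds \eqref{eq:reg}--\eqref{eq:reg-2} --- exactly your reduction. The only (cosmetic) slip is in your last paragraph: the second-order flux estimate \eqref{es2-cd} belongs to the smoother regime $\delta\in[1,2]$, while for $\delta\in[0,1)$ one only claims the first-order bound \eqref{es3-cd}.
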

The main task now is to estimate $\beu$ and $\bes$. We start by deriving a preliminary bound for $\bes$ in the next lemma.

\begin{lemma}\label{lem:1e} 
For $t >0,$ we have
\[\int_0^t \|\I^\alpha\bes\|^2 \,ds \le C \int_0^t \|\I^\alpha \teu\|^2\,ds\,.
\]
\end{lemma}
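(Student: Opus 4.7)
The plan is to recognise that the natural quantities to estimate are the fractional antiderivatives $(W,\mathbf P):=(\I^\alpha\beu,\I^\alpha\bes)$ rather than $(\beu,\bes)$ themselves, and then to run a standard mixed-FEM energy argument on the transformed pair. Subtracting \eqref{w1a-m-bar}--\eqref{w2a-m-bar} from \eqref{w1a-m}--\eqref{w2a-m} produces the error equations
\[
(\Ba\beu,v_h)+(\nabla\cdot\bes,v_h)=0,\qquad (\kappa^{-1}\bes,\bv_h)-(\beu,\nabla\cdot\bv_h)=(\beta(\beu+\teu),\bv_h),
\]
with $\beu(0)=0$. Applying $\I^\alpha$ and using the identity $\I^\alpha\Ba\beu=\beu$ (which, because $W(0)=0$, may be read as $\Ba W=\beu$) turns this into
\[
(\Ba W,v_h)+(\nabla\cdot\mathbf P,v_h)=0,\qquad (\kappa^{-1}\mathbf P,\bv_h)-(W,\nabla\cdot\bv_h)=(\beta W+\beta\I^\alpha\teu,\bv_h),
\]
so $(W,\mathbf P)$ obeys exactly the same mixed-FEM structure as $(u_h,\bs_h)$, but with zero initial data and external source $\beta\I^\alpha\teu$.

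Next I would run the standard energy argument on this transformed system. Testing the first equation with $v_h=W$ and the second with $\bv_h=\mathbf P$ and adding, the divergence cross-terms cancel. The Caputo positivity $(\Ba W,W)\ge\tfrac12\Ba\|W\|^2$, the lower bound $(\kappa^{-1}\mathbf P,\mathbf P)\ge\|\kappa\|_\infty^{-1}\|\mathbf P\|^2$, and Young's inequality with a small absorbing parameter applied to $\|\beta\|_\infty\|W\|\|\mathbf P\|$ and $\|\beta\|_\infty\|\I^\alpha\teu\|\|\mathbf P\|$ yield
\[
\tfrac12\Ba\|W\|^2+c\,\|\mathbf P\|^2\le C\,\|W\|^2+C\,\|\I^\alpha\teu\|^2.
\]
Applying $\I^\alpha$ to this pointwise-in-$t$ inequality and using $\I^\alpha\Ba\|W\|^2=\|W\|^2$ (again because $W(0)=0$), and dropping the nonnegative $c\,\I^\alpha(\|\mathbf P\|^2)$ on the left, gives
\[
\|W(t)\|^2\le C\,\I^\alpha(\|W\|^2)(t)+C\,\I^\alpha(\|\I^\alpha\teu\|^2)(t).
\]

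The weakly singular Gronwall inequality of \cite[Theorem~3.1]{DixonMcKee} then provides $\|W(t)\|^2\le C\,\I^\alpha(\|\I^\alpha\teu\|^2)(t)$, and integrating in $t$ via the Fubini identity $\int_0^t\I^\alpha\psi(s)\,ds=\I^{\alpha+1}\psi(t)\le\omega_{\alpha+1}(t)\int_0^t\psi\,ds$ delivers $\int_0^t\|W\|^2\,ds\le C\int_0^t\|\I^\alpha\teu\|^2\,ds$. Finally, I would integrate the earlier pointwise energy inequality in time; using $\int_0^t\Ba\|W\|^2\,ds=\I^{1-\alpha}(\|W\|^2)(t)\ge 0$ (once more because $W(0)=0$) I can discard this nonnegative term and insert the bound on $\int_0^t\|W\|^2\,ds$ to conclude
\[
\int_0^t\|\mathbf P\|^2\,ds\le C\int_0^t\|W\|^2\,ds+C\int_0^t\|\I^\alpha\teu\|^2\,ds\le C\int_0^t\|\I^\alpha\teu\|^2\,ds,
\]
which, on substituting $\mathbf P=\I^\alpha\bes$, is exactly the claimed estimate.

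The main obstacle is really the very first step, the reformulation in terms of $(W,\mathbf P)$. A direct energy argument on $(\beu,\bes)$ would leave behind the cross-term $(\beta\I^\alpha\beu,\I^\alpha\bes)$, which cannot be absorbed into $c\|\I^\alpha\bes\|^2$ without either an artificial smallness restriction on $\|\bF\|_\infty$ (through the mixed LBB condition) or spatial regularity of $\bes$ that is simply unavailable. Passing to the transformed unknowns restores the standard mixed-FEM energy structure, so that Caputo positivity together with the weakly singular Gronwall inequality closes the estimate uniformly in $\|\bF\|_\infty$.
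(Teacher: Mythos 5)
Your argument is correct, and its first half actually coincides with the paper's: since $\Ba W=\beu$ for $W=\I^\alpha\beu$, testing your transformed system with $(W,\mathbf P)=(\I^\alpha\beu,\I^\alpha\bes)$ reproduces term for term the paper's first energy identity $(\beu,\I^\alpha\beu)+\|\sqrt{\kappa^{-1}}\,\I^\alpha\bes\|^2=(\beta\I^\alpha e_u,\I^\alpha\bes)$. The genuine divergence is in how the $\beta$-coupling is closed. The paper bounds the right-hand side by $C\|\I^\alpha e_u\|^2$, integrates using only the positivity $\int_0^t(\beu,\I^\alpha\beu)\,ds\ge0$, and is then forced into a second energy identity (test functions $\I^\alpha\beu$ and $\I^{2\alpha}\bes$) together with \eqref{eq: nu 1} to convert $\int_0^t\|\I^\alpha e_u\|^2\,ds$ into $\int_0^t\|\I^\alpha\teu\|^2\,ds$ modulo an $\I^{2\alpha}\bes$ term, finishing with a Gronwall inequality on $t\mapsto\int_0^t\|\I^\alpha\bes\|^2\,ds$. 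You instead split $\I^\alpha e_u=W+\I^\alpha\teu$, invoke the Alikhanov coercivity $(\Ba W,W)\ge\tfrac12\Ba\|W\|^2$ (which the paper defers to Lemma~\ref{lem:2e}), run the Gronwall inequality pointwise on $\|W(t)\|^2$, and only then integrate; this eliminates the second energy identity and the double integral $\I^{2\alpha}\bes$ entirely, so your route is shorter, at the price of one extra use of Alikhanov's inequality. Two small caveats: the Dixon--McKee conclusion is $\|W(t)\|^2\le Cf(t)+C\I^\alpha f(t)$ with $f=C\I^\alpha(\|\I^\alpha\teu\|^2)$ rather than $\le Cf(t)$, since $f$ need not be monotone --- harmless here because the final step integrates in time anyway; and your closing claim that the cross term $(\beta W,\mathbf P)$ ``cannot be absorbed'' in a direct argument overstates the obstacle, since your own proof absorbs part of it into $\|\mathbf P\|^2$ and handles the remainder by the Gronwall inequality, with constants depending on $\|\bF\|_\infty$ but with no smallness restriction --- exactly as in the paper.
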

\begin{proof} 
From (\ref{w1a-m})-(\ref{w2a-m}) and (\ref{w1a-m-bar})-(\ref{w2a-m-bar}), we have 
\begin{eqnarray}\label{aa}
(\Ba \beu, v_h)+ (\nabla\cdot \bes , v_h) &=& 0 \quad \forall v_h \in V_h,\\
\label{bb}
(\kappa^{-1} \bes, \bv_h) - (\beu,\nabla\cdot \bv_h ) &=& (\beta e_u,\bv_h) \quad \forall \bv_h \in \bV_h.
\end{eqnarray}
 Applying $\I^\alpha$ to both sides of \eqref{aa}, then using  $\I^\alpha\Ba=\I\partial_t$ and  $\beu(0)=0$, 
\begin{equation} \label{eq: n} 
(\beu, v_h)+ (\nabla\cdot \I^\alpha \bes , v_h) =0\,.
\end{equation} 
Choosing $v_h=\I^\alpha \beu$ in \eqref{eq: n} and $\bv_h = \I^\alpha \bes$ in \eqref{bb} after applying $\I^\alpha$ to both sides, then adding them up and applying the Cauchy-Schwarz inequality,  
\[(\beu, \I^\alpha \beu) +\|\sqrt{\kappa^{-1}} \I^\alpha \bes\|^2 = (\beta \I^\alpha e_u, \I^\alpha \bes) \le C\|\I^\alpha e_u\|^2 + \frac12\|\sqrt{\kappa^{-1}} \I^\alpha \bes\|^2.
\]
Canceling the common terms, then integrating both sides, using  $\int_0^t (\beu, \I^\alpha \beu)\,ds\ge 0,$ and the diffusivity coefficient assumption $\kappa>c_0>0,$ we reach 
\begin{equation}\label{bes estimate}
  \int_0^t\| \I^\alpha \bes\|^2 \,ds  \le C\int_0^t \|\I^\alpha e_u\|^2\,ds\,.
\end{equation}
Applying $\I^\alpha$ to both sides of \eqref{bb} and \eqref{eq: n}, then choosing $v_h=\I^\alpha\beu$ and $\bv_h = \I^{2\alpha} \bes$. Adding them up to obtain 
\[\|\I^\alpha \beu\|^2 +(\kappa^{-1} \I^\alpha(\I^\alpha\bes), \I^\alpha \bes) = (\beta \I^\alpha e_u, \I^{2\alpha} \bes)\,.\]
Since $e_u=\beu +\teu$, $\|\I^\alpha e_u\|^2\le 2\|\I^\alpha \beu\|^2+2\|\I^\alpha \teu\|^2$. Hence, by using the above equation, we get 
\[\|\I^\alpha e_u\|^2 +2(\kappa^{-1} \I^\alpha(\I^\alpha\bes), \I^\alpha \bes) \le 2\|\I^\alpha \teu\|^2+ 2(\beta \I^\alpha e_u, \I^{2\alpha} \bes)\,.\]
Using  $2(\beta \I^\alpha e_u, \I^{2\alpha} \bes) \le \frac12\|\I^\alpha e_u\|^2 +C\|\I^{2\alpha} \bes\|^2$, then canceling the common terms and integrating both sides, and since $\int_0^t(\kappa^{-1} \I^\alpha(\I^\alpha\bes), \I^\alpha \bes)\,ds\ge 0$, we reach 
\[\int_0^t \|\I^\alpha e_u\|^2\,ds \le C\int_0^t \|\I^\alpha \teu\|^2\,ds+C\int_0^t\|\I^{2\alpha} \bes\|^2\,ds\,.\]
Using \eqref{eq: nu 1} with $\varphi= \I^{\alpha} \bes,$ we have  
\[\int_0^t \|\I^{2\alpha} \bes\|^2\,ds \le \omega_{\alpha+1}(t) \int_0^t \omega_\alpha(t-s)\int_0^s \| \I^\alpha \bes(q)\|^2dq\,ds,\]
and then, with the help of \eqref{bes estimate}, we deduce that
\[\int_0^t \|\I^\alpha \bes\|^2\,ds \le C\int_0^t \|\I^\alpha \teu\|^2\,ds + \omega_{\alpha+1}(t) \int_0^t \omega_\alpha(t-s)\int_0^s \| \I^\alpha \bes(q)\|^2dq\,ds.\]
An application of a weakly singular Gronwall inequality completes the proof. 
\end{proof}
In the next lemma, we derive an upper bound for $\beu$ and $\bes$. This bound leads to optimal convergence rates in 
the $L^2(\Omega)$-norm of $e_u$ and $e_\bs$.
\begin{lemma}\label{lem:2e} 
For $0<t\leq T$, we have
\[\|t\beu\|^2\le 
C \max_{0\le s\le t} \I^\alpha(\|s\teu\|^2)+ C\,t^{1-\alpha}\int_0^t \|\I^\alpha \teu\|^2\,ds,\]
and 
\[t^2\|\bes\|^2\leq  C \max_{0\le s\le t}\Big(\I^{\alpha}(\| \I^{1-\alpha} (s \teu)'\|^2)+ \I^{\alpha}(\|\I^{1-\alpha}\beu\|^2)(s)\Big)\,.\]
\end{lemma}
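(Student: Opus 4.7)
The plan is to derive both bounds via a time-weighted energy argument for the error system \eqref{aa}--\eqref{bb}, exploiting $\beu(0)=0$ and the Leibniz identity \eqref{Leibniz-2}, which reduces to $\Ba(t\beu)=t\Ba\beu+\alpha\I^{1-\alpha}\beu$. Multiplying \eqref{aa}--\eqref{bb} by $t$ rewrites the error system as
\begin{align*}
(\Ba(t\beu),v_h)+(\nabla\cdot(t\bes),v_h)&=\alpha(\I^{1-\alpha}\beu,v_h),\\
(\kappa^{-1}(t\bes),\bv_h)-(t\beu,\nabla\cdot\bv_h)&=(\beta\,te_u,\bv_h).
\end{align*}
Testing with $v_h=t\beu$ and $\bv_h=t\bes$, summing, applying the Caputo energy inequality $(\Ba w,w)\ge\tfrac12\Ba(\|w\|^2)$, and then $\I^\alpha$-integrating (with $(t\beu)(0)=0$, so that $\I^\alpha\Ba(\|t\beu\|^2)=\|t\beu\|^2$) yields, after absorbing the $(\beta\,te_u,t\bes)$ contribution via Young's inequality and $\kappa\ge c_0>0$, a master inequality
\[\|t\beu\|^2+\I^\alpha(\|t\bes\|^2)\le C\,\I^\alpha(\|te_u\|^2)+C\,\I^\alpha(\|\I^{1-\alpha}\beu\|\,\|t\beu\|).\]

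\textbf{First estimate.} I would split $e_u=\beu+\teu$, so that $C\,\I^\alpha(\|t\teu\|^2)$ is already dominated by $C\max_{0\le s\le t}\I^\alpha(\|s\teu\|^2)(s)$, while $\I^\alpha(\|t\beu\|^2)$ contributions are bounded by a supremum in time and eventually absorbed by a weakly singular Gronwall inequality \cite[Theorem 3.1]{DixonMcKee}. The cross term $\I^\alpha(\|\I^{1-\alpha}\beu\|\,\|t\beu\|)$ is handled by Cauchy--Schwarz together with the pointwise bound $\|\I^{1-\alpha}\beu(t)\|^2\le\omega_{2-\alpha}(t)\,\I^{1-\alpha}(\|\beu\|^2)(t)$, which introduces the $t^{1-\alpha}$ prefactor through $\omega_{2-\alpha}(t)=Ct^{1-\alpha}$. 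A conversion of the resulting $\int_0^t\|\beu\|^2\,ds$ back into $\int_0^t\|\I^\alpha\teu\|^2\,ds$ uses Lemma \ref{lem:1e}, the identity $\beu=-\nabla\cdot\I^\alpha\bes$ coming from \eqref{eq: n}, and the technical inequalities \eqref{eq: nu 1}--\eqref{step1tech}.

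\textbf{Second estimate and main obstacle.} For the bound on $t^2\|\bes\|^2$, I would differentiate \eqref{aa}--\eqref{bb} in time (legitimate since $\beu(0)=0$ implies $\partial_t\Ba=\Ba\partial_t$), multiply by $t^2$, and invoke \eqref{Leibniz-2} twice to move the weight inside the Caputo derivative, yielding an analogous energy system for $(t^2\partial_t\beu,t^2\partial_t\bes)$. Energy testing together with $\I^\alpha$-integration gives pointwise-in-time control on $\|t^2\partial_t\beu\|$ and on $\I^\alpha(\|t^2\partial_t\bes\|^2)$, which recombines through \eqref{bb} to produce $t^2\|\bes\|^2$. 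Rewriting the $\teu'$ contributions using $\Ba=\I^{1-\alpha}\partial_t$ together with the splitting $e_u=\beu+\teu$ then yields exactly $\|\I^{1-\alpha}(s\teu)'\|^2$ and $\|\I^{1-\alpha}\beu\|^2$. The main obstacle throughout is the accumulation of $\I^{1-\alpha}$-weighted correction terms generated by the Leibniz rule: closing the first estimate requires a careful chain of \eqref{eq: nu 1}--\eqref{step1tech} and Lemma \ref{lem:1e} to produce exactly the claimed $t^{1-\alpha}\int_0^t\|\I^\alpha\teu\|^2\,ds$ prefactor, and the second estimate compounds this difficulty with the extra boundary-type terms arising from iterating \eqref{Leibniz-2}.
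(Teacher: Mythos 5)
Your overall framework for the first estimate (the $t$-weighted test functions, the Alikhanov inequality $(\Ba w,w)\ge\tfrac12\Ba\|w\|^2$, applying $\I^\alpha$, and closing with a weakly singular Gronwall inequality) matches the paper's, but your treatment of the Leibniz correction term $\alpha(\I^{1-\alpha}\beu,t\beu)$ has a genuine gap, and this term is precisely the crux. Estimating it by Cauchy--Schwarz and Young leaves $\I^\alpha(\|\I^{1-\alpha}\beu\|^2)$ on the right-hand side, which is \emph{not} a quantity you control at this stage: Lemma \ref{lem:1e} bounds $\int_0^t\|\I^\alpha\bes\|^2\,ds$ (and, inside its proof, $\int_0^t\|\I^\alpha e_u\|^2\,ds$), but says nothing about $\|\beu\|$ or $\|\I^{1-\alpha}\beu\|$; for $\alpha>1/2$ one cannot pass from $\I^{1-\alpha}\beu$ to $\I^\alpha\beu$ by further integration, and \eqref{eq: nu mu} goes in the wrong direction. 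Your fallback identity $\beu=-\nabla\cdot\I^\alpha\bes$ (really $\beu=-P_h\nabla\cdot\I^\alpha\bes$, from \eqref{eq: n}) does not help, because bounding $\|\beu\|$ this way requires control of $\|\nabla\cdot\I^\alpha\bes\|$, which no available result provides. Nor can the term be folded into the Gronwall step, since $\|\I^{1-\alpha}\beu(s)\|^2$ depends on the full history of the \emph{unweighted} $\|\beu\|$, producing a nonintegrable weight $q^{-2}$ against $\|q\beu(q)\|^2$. The paper's key idea, which is missing from your proposal, is to avoid estimating this pairing altogether: integrating \eqref{aa} in time and testing with $t\beu$, then testing \eqref{bb} with $t\I\bes$, yields the exact identity $(\I^{1-\alpha}\beu,t\beu)=-(\kappa^{-1}t\bes,\I\bes)+(\beta\,te_u,\I\bes)$, whose right-hand side is absorbed by the good term $\|\sqrt{\kappa^{-1}}\,t\bes\|^2$ plus $C\|\I\bes\|^2$; the latter is converted to $t^{1-\alpha}\int_0^t\|\I^\alpha\teu\|^2\,ds$ via \eqref{step1tech} with $\nu=\alpha$ and Lemma \ref{lem:1e}. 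That is where the $t^{1-\alpha}$ prefactor actually comes from, not from $\omega_{2-\alpha}$ acting on $\I^{1-\alpha}\beu$.

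For the second estimate your route also diverges from the paper's and is problematic as described. The paper does not differentiate the error equations in time: it applies $\Ba$ to the already $t$-weighted flux equation \eqref{bb-n} and tests with $v_h=\Ba(t\beu)$ in \eqref{aa-n} and $\bv_h=t\bes$ in the resulting equation; this puts $\tfrac12\Ba\|\sqrt{\kappa^{-1}}\,t\bes\|^2$ on the left and produces exactly the terms $\|\Ba(t\teu)\|^2=\|\I^{1-\alpha}(t\teu)'\|^2$ and $\|\I^{1-\alpha}\beu\|^2$ on the right, after which $\I^\alpha$ and Gronwall finish. Your plan to control $t^2\partial_t\bes$ and then ``recombine through \eqref{bb}'' is not a complete argument: recovering $\|\bes\|$ from \eqref{bb} requires testing with $\bv_h=\bes$, which brings in $(\beu,\nabla\cdot\bes)$ and hence the uncontrolled divergence of $\bes$; moreover $\partial_t$ and $\Ba$ do not commute without a boundary term, and iterating \eqref{Leibniz-2} with a $t^2$ weight generates correction terms that do not appear in the claimed bound. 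Both parts should be redone following the substitution identity above.
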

\begin{proof} 
Multiply both sides of \eqref{aa} and \eqref{bb} by $t$ and use \eqref{Leibniz-2} to find that
\begin{eqnarray}\label{aa-n}
(\Ba (t\beu), v_h)+ (\nabla\cdot (t\bes), v_h) &=& \alpha(\I^{1-\alpha}\beu,v_h), \\
\label{bb-n}
(\kappa^{-1} (t\bes), \bv_h) - (t\beu,\nabla\cdot \bv_h ) &=& (\beta (te_u),\bv_h).
\end{eqnarray}
Next choose $v_h=t\beu$ and $\bv_h=t\bes$ to obtain
$$
(\Ba (t\beu),t\beu)+\|\sqrt{\kappa^{-1}}\, (t\bes)\|^2 = (\beta (te_u),t\bes) +\alpha(\I^{1-\alpha}\beu,t\beu).
$$
By integrating \eqref{aa} in time followed by choosing $v_h=t\beu$ then choosing $\bv_h=t\I \bes$ in \eqref{bb} and adding them up. This leads to 
\begin{equation}\label{eq: 3-1}
  (\I^{1-\alpha}\beu, t\beu)= - (\kappa^{-1} t\bes, \I \bes ) + (\beta (te_u), \I \bes),
\end{equation} 
and hence, 
$$
(\Ba (t\beu),t\beu)+\|\sqrt{\kappa^{-1}}\, (t\bes)\|^2 = (\beta (te_u),t\bes)+\alpha (\beta (te_u), \I \bes) - \alpha(\kappa^{-1} t\bes, \I \bes ).
$$
Using  $\Ba(\|t\beu\|^2)\leq 2(\Ba (t\beu),t\beu)$, which is due to \cite[Corollary 1]{Alikhanov2012}, we get
\begin{equation}\label{eq: 3-2}
\Ba(\|t\beu\|^2) +2\|\sqrt{\kappa^{-1}}\, (t\bes)\|^2\leq 
C\|te_u\|^2 +\|\sqrt{\kappa^{-1}}\, (t\bes)\|^2+C\|\I\bes\|^2.
\end{equation}
Canceling the similar terms, then using $e_u=\teu+\beu$ and applying $\I^\alpha$, 
\begin{align*}
\|t\beu\|^2 &\leq 
C \I^\alpha(\|t\teu\|^2)+C\I^\alpha(\|t\beu\|^2) + C \I^\alpha(\|\I\bes\|^2)\,.
\end{align*}
By applying \eqref{step1tech} with $\nu=\alpha$, we have $\I^\alpha(\|\I\bes\|^2)(t)\le \omega_{2-\alpha}(t) \I( \|\I^\alpha\bes(s)\|^2)(t),$ and so, using Lemma~\ref{lem:1e}, we see that
\begin{align*}
\|t\beu\|^2&\le 
C \max_{0\le s\le t} \I^\alpha(\|s\teu\|^2) + C\,t^{1-\alpha}\int_0^t \|\I^\alpha \teu\|^2\,ds+C \I^\alpha\Big(\|t\beu\|^2\Big).\end{align*}
Consequently, an application of a weakly singular Gronwall inequality leads to the first desired estimate for $\beu$.
We now show the estimate for $\bes$. To do so, we operate $\Ba$ on \eqref{bb-n} so that
\begin{equation}\label{cc-n}
(\kappa^{-1} \Ba(t\bes), \bv_h) - (\Ba(t\beu),\nabla\cdot \bv_h )= (\beta \Ba(te_u),\bv_h).
\end{equation}
Next choose $v_h=\Ba(t\beu)$ in \eqref{aa-n} and $\bv_h=t\bes$ in \eqref{cc-n}, and arrive at
\begin{multline*}
 \|\Ba (t\beu)\|^2 +\frac{1}{2} \Ba\|\sqrt{\kappa^{-1}} t\bes\|^2 \\
 \leq C\left(\| \Ba (t \teu)\|^2+\|\I^{1-\alpha}\beu\|^2
 +\|\sqrt{\kappa^{-1}} t\bes\|^2\right) +\frac{1}{2}\|\Ba (t\beu)\|^2,
\end{multline*}
and thus, 
\[ \Ba\|\sqrt{\kappa^{-1}} t\bes\|^2 \leq C\left(\| \Ba (t \teu)\|^2+
\|\I^{1-\alpha}\beu\|^2
 +\|\sqrt{\kappa^{-1}} t\bes\|^2\right).\]
Applying $\I^{\alpha}$ to both sides and using $t\bes(0)=0$, we get
\[t^2\|\sqrt{\kappa^{-1}}\bes\|^2
\leq  C \max_{0\le s\le t}\Big(\I^{\alpha}(\| \I^{1-\alpha} (s \teu)'\|^2)+ C\I^{\alpha}(\|\I^{1-\alpha}\beu\|^2)(s)\Big) 
+C\I^{\alpha}\left(\|\sqrt{\kappa^{-1}} t\bes\|^2\right)\,.\]
For completing the proof of the second desired estimate, apply a weakly singular Gronwall inequality then use the assumption $\kappa>c_0>0$. 
\end{proof}

We are ready now to derive optimal error estimates for the semidiscrete mixed finite element problem with smooth and nonsmooth initial data. 

\begin{theorem} \label{thm:mixed-sm}
 Let the pairs $(u,\bs)$ and $(u_h,\bs_h)$ be the solutions of \eqref{w1-m}-\eqref{w2-m} and 
 \eqref{w1a-m}-\eqref{w2a-m}, respectively. Assume $u_0 \in \dot H^\delta(\Omega)$ with $\delta \in 
 [0,2]$, and $u_{0h}=P_hu_0$. Then the following error estimates hold for $t>0$:
\begin{equation}\label{es1-n}
 \|(u_h-u)(t)\|\leq
 C h^2 t^{-\alpha(2-\delta)/2}\|u_0\|_{\dot H^\delta(\Omega)}, \quad \delta\in [0,2],
\end{equation}
and
\begin{equation}\label{es2-n}
\|(\bs_h-\bs)(t)\| \leq C h^2 t^{-\alpha(3-\delta)/2}\|u_0\|_{\dot H^\delta(\Omega)},\quad \delta\in [1,2], 
\end{equation}
\begin{equation}\label{es3-n}
\|(\bs_h-\bs)(t)\| \leq C h t^{-\alpha(2-\delta)/2}\|u_0\|_{\dot H^\delta(\Omega)},\quad \delta\in [0,1). 
\end{equation}
\end{theorem}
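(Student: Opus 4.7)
The strategy is to apply the error splittings $e_u = \teu + \beu$ and $e_\bs = \tes + \bes$, control the tilde components with Theorem~\ref{thm:mixed-cd}, control the bar components with Lemma~\ref{lem:2e}, and combine via the triangle inequality. Throughout, the computations reduce to evaluating Beta-function convolutions of the form $\int_0^t (t-s)^{a-1} s^{b-1}\,ds = t^{a+b-1}B(a,b)$, and the delicate point is verifying that the resulting exponents of $t$ match the claimed singular decay.

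For (\ref{es1-n}), I would substitute the bound $\|\teu(s)\| \le C h^2 s^{-\alpha(2-\delta)/2}\|u_0\|_{\dot H^\delta(\Omega)}$ (from Theorem~\ref{thm:mixed-cd} with $r=2$) into the first estimate of Lemma~\ref{lem:2e}. For the first term, the Beta-function identity gives $\I^\alpha(\|s\teu\|^2)(t) \le Ch^4 t^{2-\alpha(1-\delta)}\|u_0\|^2_{\dot H^\delta(\Omega)}$. For the second, one first computes $\|\I^\alpha\teu(s)\| \le Ch^2 s^{\alpha\delta/2}$ (the exponent being $\alpha-\alpha(2-\delta)/2=\alpha\delta/2$), so that $t^{1-\alpha}\int_0^t\|\I^\alpha\teu\|^2\,ds \le Ch^4 t^{2-\alpha(1-\delta)}$. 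Since $t^{2-\alpha(1-\delta)}\le T^\alpha t^{2-\alpha(2-\delta)}$ for $t\le T$, dividing by $t^2$ and combining with (\ref{es1-cd}) yields (\ref{es1-n}).

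For (\ref{es2-n}) with $\delta\in[1,2]$, I would apply the second estimate of Lemma~\ref{lem:2e}. Using $\|(s\teu)'\|\le\|\teu\|+s\|\teu'\|\le Ch^2 s^{-\alpha(2-\delta)/2}$ from Theorem~\ref{thm:mixed-cd} and the just-derived bound on $\beu$, I estimate the two convolutions $\I^\alpha(\|\I^{1-\alpha}(s\teu)'\|^2)$ and $\I^\alpha(\|\I^{1-\alpha}\beu\|^2)$: the inner integral gives $\|\I^{1-\alpha}(s\teu)'(s)\|\le Ch^2 s^{1-2\alpha+\alpha\delta/2}$, and applying $\I^\alpha$ produces $Ch^4 t^{2-\alpha(3-\delta)}$, provided the Beta integral converges, i.e., $2-4\alpha+\alpha\delta>-1$, or equivalently $\alpha(4-\delta)<3$. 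This integrability condition is precisely what restricts the second-order rate to $\delta\ge 1$. The $\beu$-term contributes at the same order by the same Beta calculation. Dividing by $t^2$ and combining with (\ref{es2-cd}) gives (\ref{es2-n}).

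For (\ref{es3-n}) with $\delta\in[0,1)$, where the integrability obstruction above blocks an $h^2$ bound, I would repeat the argument using the weaker $r=1$ bounds $\|\teu(s)\|, \|(s\teu)'(s)\|\le Ch s^{-\alpha(1-\delta)/2}$ from Theorem~\ref{thm:mixed-cd}. The milder singularity relaxes the convergence condition to $\alpha(3-\delta)<3$, which now holds for all $\delta\in[0,1)$, and produces $\|\bes(t)\|\le Ch t^{-\alpha(2-\delta)/2}$; combining with (\ref{es3-cd}) yields (\ref{es3-n}). The main obstacle throughout is the bookkeeping of Beta exponents together with the verification that every convolution integral converges and that the Beta constants stay bounded as $\alpha\to 1^-$, so the estimates remain $\alpha$-robust as advertised.
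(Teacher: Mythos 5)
Your proposal is correct and follows essentially the same route as the paper: the same splitting $e_u=\teu+\beu$, $e_\bs=\tes+\bes$, the same inputs (Theorem~\ref{thm:mixed-cd} for the tilde parts and Lemma~\ref{lem:2e} for the bar parts), and the same Beta-function exponent bookkeeping, finished with the triangle inequality. The only quibble is that the restriction to $\delta\ge 1$ in \eqref{es2-n} comes primarily from the validity range of \eqref{es2-cd} (equivalently the $H^3$ regularity assumption \eqref{eq:reg-2}), not solely from your integrability condition $\alpha(4-\delta)<3$, which can still hold for $\delta<1$ provided $\alpha$ is not too close to $1$.
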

\begin{proof} Using the estimate \eqref{es1-cd}, we find that for $\delta \in [0,2]$,
\[  \I^\alpha(\|t \teu\|^2) \le Ch^4\int_0^t (t-s)^{\alpha-1} s^{-\alpha(2-\delta)+2}\,ds\|u_0\|_{\dot H^\delta(\Omega)}
\le Ch^4t^{\alpha(\delta-1)+2}\|u_0\|_{\dot H^\delta(\Omega)}^2,\]
and 
\[\int_0^t \|\I^\alpha \teu\|^2\,ds \le Ch^4\int_0^t s^{\alpha\delta}\,ds\|u_0\|_{\dot H^\delta(\Omega)}
\le Ch^4t^{\alpha\delta+1}\|u_0\|_{\dot H^\delta(\Omega)}^2\,.\]
Then, by Lemma \ref{lem:2e}, we deduce that for $\delta \in [0,2]$,
$$
\|\beu\|^2\leq Ch^4t^{\alpha(\delta-1)}
\|u_0\|_{\dot H^\delta(\Omega)}^2= Ch^4t^{-\alpha(1-\delta)}\|u_0\|_{\dot H^\delta(\Omega)}^2\,.
$$
Similarly, for $\delta \in [1,2]$, we have
\[ \I^\alpha\Big(\| \I^{1-\alpha} (t \teu)'\|^2\Big)\leq Ch^4\int_0^t (t-s)^{\alpha-1}s^{-\alpha(4-\delta)+2}\,ds\,\|u_0\|_{\dot H^\delta(\Omega)}^2
    \leq Ch^4 t^{-\alpha(3-\delta)+2}\|u_0\|_{\dot H^\delta(\Omega)}^2,\]
and 
 \begin{multline*}
   \I^{\alpha}\left(\|\I^{1-\alpha}\beu\|^2\right)(t) \leq 
 \I^{\alpha}((\I^{1-\alpha}\|\beu\|)^2)(t)\\
 \le Ch^4\int_0^t(t-s)^{\alpha-1} s^{\alpha(\delta-3)+2}\,ds\|u_0\|_{\dot H^\delta(\Omega)}
\leq Ch^4t^{\alpha(\delta-2)+2}\|u_0\|_{\dot H^\delta(\Omega)}^2\,.
 \end{multline*}
Then, by Lemma \ref{lem:2e}, we deduce
$\|\bes\|^2\leq Ch^4t^{-\alpha(3-\delta)}\|u_0\|_{\dot H^\delta(\Omega)}^2,$ for $\delta \in [1,2].$
The desired estimates \eqref{es1-n} and \eqref{es2-n} follow now by using the triangle inequality and the bounds in 
Theorem~\ref{thm:mixed-cd}. The last inequality \eqref{es3-n}
can be obtained analogously by using the estimate \eqref{es1-cd} with $r=1$ and $\delta \in [0,1)$.
\end{proof}
\begin{remark}\label{remark:P0}
Considering the approximation properties given in \eqref{eq:w9}, it can be deduced that the estimates \eqref{es1-n} and \eqref{es2-n} are both of order $O(h)$ in the case of the lowest RT elements ($\ell=0$).
\end{remark}
\section{Fully discrete schemes}\label{MFEMC}
This section is devoted to the analysis of a fully discrete scheme for problem \eqref{w1-m}-\eqref{w2-m} based on a convolution quadrature (CQ) generated by the backward Euler (BE) method. Divide the time interval $[0,T]$ into $N$ equal subintervals with a time step size $\tau=T/N$, and let $t_j=j\tau$. 

The time fractional Riemann--Liouville derivative ${^R}\partial_t^{\alpha}(\varphi_n)$ can be conveniently discretized using the 
BECQ (with $\varphi_j= \varphi(t_j)$):
$$
\dop \varphi_n :=\tau^{-\alpha}\sum_{j=0}^na_{n-j}^{(\alpha)}\varphi_j,\quad\mbox{where}\; \sum_{j=0}^\infty a_j^{(\alpha)}\xi^j
=(1-\xi)^{\alpha},\quad a_j^{(\alpha)}=(-1)^j\left(\begin{array}{c}\alpha\\j\end{array}\right).
$$
We approximate the Riemann--Liouville time-fractional integral $\I^\alpha$ using the discrete operator 
$\dopm$, which is similarly defined. Note that $\partial_\tau^{-1}\varphi_n=\tau \sum_{j=0}^n\varphi_j$. Using the relation ${^R}\partial_t^{\alpha}\varphi(t)=\partial_t^{\alpha}(\varphi(t)-\varphi(0))$, the proposed fully implicit scheme for \eqref{w1-m}-\eqref{w2-m} is to find a pair 
$(U_h^n,\Sigma_h^n)\in V_h\times \bV_h$ such that for $n\geq 1$,
\begin{eqnarray}\label{w1a-BE}
(\dop U_h^n, v_h)+(\nabla\cdot \Sigma_h^n, v_h) &=& (\dop U_h^0,v_h)\;\;\;\forall v_h \in V_h,\\
\label{w2a-BE}
(\kappa^{-1}\Sigma_h^n, \bv_h) - (U_h^n,\nabla\cdot \bv_h) &=& (\beta U_h^n, \bv_h ) \;\;\;\forall \bv_h \in \bV_h,
\end{eqnarray}
with $U_h^0=P_hu_0$. Recall that $u_0\in \dot{H}^\delta(\Omega)$ and $\delta \in [0,2]$. For the error analysis, we introduce the intermediate discrete solution $(\bar U_h^n,\bar\Sigma_h^n)\in V_h\times \bV_h$ satisfying 
\begin{eqnarray}\label{sys1_a}
(\dop \bar U_h^n, v_h)+(\nabla\cdot \bar\Sigma_h^n, v_h) &=& (\dop U_h^0,v_h)\;\;\;\forall v_h \in V_h,\\
\label{sys1_b}
(\kappa^{-1}\bar\Sigma_h^n, \bv_h) - (\bar U_h^n,\nabla\cdot \bv_h) &=& (\beta  u_h(t_n), \bv_h ) \;\;\;\forall \bv_h \in \bV_h,
\end{eqnarray}
for $n\geq 1$, with $\bar U_h^0=U_h^0$. The first estimate is the next theorem is proved in~\cite[Theorem 6.2]{K-2020}. The estimate for $\bar \Sigma_h^n$ is proved in \cite[Theorem 6.3]{K-2020} for initial data $u_0\in L^2(\Omega)$, i.e., for the case $\delta=0$. For $\delta>0$, one can directly obtain the corresponding estimates from the proof of the same theorem. 

\begin{theorem} \label{thm:mixed-sm-f}
 Let $(\bar U_h^n,\bar \Sigma_h^n)$ and $(u_h,\bs_h)$ be the solutions of \eqref{sys1_a}-\eqref{sys1_b} and \eqref{w1a-m}-\eqref{w2a-m}, respectively. 
 Then the following error estimates hold for $t_n>0$:
\begin{equation*}\label{es1-d}
 \|\bar U_h^n-u_h(t_n)\|\leq
 C (\tau t_n^{\alpha\delta/2-1} + h^2 t_n^{-\alpha(2-\delta)/2})\|u_0\|_{\dot H^\delta(\Omega)},\quad \delta \in [0,2],
\end{equation*}
and
\[ \|\bar \Sigma_h^n-\bs_h(t_n)\|\leq C (\tau t_n^{\alpha(\delta-1)/2-1} + h^2 t_n^{-\alpha(3-\delta)/2})\|u_0\|_{\dot H^\delta(\Omega)},\quad \delta \in [1,2],
\]
\[ \|\bar \Sigma_h^n-\bs_h(t_n)\|\leq C (\tau t_n^{\alpha(\delta-1)/2-1}+ h t_n^{-\alpha(2-\delta)/2})\|u_0\|_{\dot H^\delta(\Omega)},\quad \delta \in [0,1).\]
\end{theorem}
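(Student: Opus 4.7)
My plan is to reduce the statements to results already available in \cite{K-2020}. Observe that \eqref{sys1_a}--\eqref{sys1_b} is precisely a backward Euler convolution quadrature discretization of the semidiscrete mixed system \eqref{w1a-m}--\eqref{w2a-m}, with the coupling term $(\beta u_h(t_n), \bv_h)$ frozen in time and treated as given data. Under this viewpoint, the error $(\bar U_h^n - u_h(t_n),\; \bar\Sigma_h^n - \bs_h(t_n))$ is a purely time-discretization error, driven only by the CQ consistency residual $\rho^n := \Ba u_h(t_n) - (\dop u_h(t_n) - \dop U_h^0)$ appearing in the first equation. The first estimate then follows directly from \cite[Theorem 6.2]{K-2020}: its only inputs are the smoothing estimates on $u_h$ and $u_h'$, which inherit from \eqref{eq:reg} applied to $u_h$ in place of $u$.

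For the two flux estimates, I would mimic the $t_n$-weighted discrete energy argument that underpins \cite[Theorem 6.3]{K-2020} (where the case $\delta=0$ is already established). Setting $\mathcal{E}_u^n = \bar U_h^n - u_h(t_n)$ and $\mathcal{E}_\bs^n = \bar\Sigma_h^n - \bs_h(t_n)$, one obtains a discrete analog of the system \eqref{aa}--\eqref{bb}, with $\rho^n$ as the only non-vanishing right-hand side. Multiplying by $t_n$, applying a discrete Leibniz rule for $\dop$, and testing with $\dop(t_n \mathcal{E}_u^n)$ and $t_n \mathcal{E}_\bs^n$, I would derive a discrete counterpart of Lemma~\ref{lem:2e}. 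Invoking the Alikhanov-type coercivity $\dop \| t_n \mathcal{E}_u^n\|^2 \le 2\bigl( \dop(t_n\mathcal{E}_u^n), t_n\mathcal{E}_u^n\bigr)$ and the discrete weakly singular Gronwall inequality of \cite[Theorem 6.1]{DixonMcKee}, one then recovers the desired bound. Passing from $\delta=0$ to $\delta \in (0,2]$ amounts to inserting the sharper smoothing bounds on $u_h$ (in particular the regularity \eqref{eq:reg-2} when $\delta \in [1,2]$) at the step where $\rho^n$ is estimated.

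The main obstacle I anticipate is not the energy calculation itself but the bookkeeping of singular time weights. The $\tau\, t_n^{\alpha(\delta-1)/2-1}$ factor mirrors the behavior of $\|u_h''(t)\|$ (needed for the flux), while the spatial weight $t_n^{-\alpha(3-\delta)/2}$ in the $h^2$ bound comes from \eqref{eq:reg-2}. Propagating these sharp weights through the discrete Gronwall step requires estimating sums of the form $\tau\sum_{j=1}^n \omega_\alpha(t_n-t_j)\, t_j^{-\gamma}$ for various $\gamma$ and matching exponents carefully; this delicate bookkeeping is already carried out in \cite{K-2020} for $\delta=0$, so in practice the extension to $\delta>0$ reduces to invoking those lemmas with the regularity exponents appropriate to $\delta$.
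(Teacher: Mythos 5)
Your proposal matches the paper's treatment: the paper likewise observes that the first bound is exactly \cite[Theorem 6.2]{K-2020}, and that the flux bounds follow from the proof of \cite[Theorem 6.3]{K-2020} (stated there for $\delta=0$) by rerunning the same argument with the smoothing estimates appropriate to $\delta>0$. Your additional observation that the $\beta$-coupling term cancels in the error system for \eqref{sys1_a}--\eqref{sys1_b} versus \eqref{w1a-m}--\eqref{w2a-m}, leaving only the CQ consistency residual, is precisely the structural reason the citation to the $\bF\equiv 0$ results of \cite{K-2020} is legitimate here.
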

The main task now is to estimate the errors $\dbeu^n := U_h^n-\bar U_h^n$ and 
$\dbes^n := \Sigma_h^n-\bar \Sigma_h^n$. From \eqref{w1a-BE}-\eqref{w2a-BE} and 
\eqref{sys1_a}-\eqref{sys1_b}, we note that $\dbeu^n$ and 
$\dbes^n$ satisfy
\begin{eqnarray}\label{sys2_a}
(\dop \dbeu^n,  v_h)+(\nabla\cdot \dbes^n, v_h) &=& 0\;\;\;\forall v_h \in V_h,\\
\label{sys2_b}
(\kappa^{-1}\dbes^n, \bv_h) - (\dbeu^n,\nabla\cdot \bv_h) &=& 
(\beta {\cal E}_u^n, \bv_h ) \;\;\;\forall \bv_h \in \bV_h,
\end{eqnarray}
where  ${\cal E}_u^n=U_h^n-u_h(t_n).$  For convenience,  we set $\dteu^n =\bar U_h^n-u_h(t_n)$ and $\dtes^n =\bar\Sigma_h^n-\sigma_h(t_n)$.
In the next lemma, we derive a preliminary bound for $\dbes$. Before, we recall the following properties (see, \cite{KM-2022} and \cite{WXZ2020}, respectively):
\begin{equation}\label{e:p1}
\frac{\tau^\alpha}{2}\sum_{j=0}^n\|\dbeu^n\|^2\leq \sum_{j=0}^n(\dopm\dbeu^n,\dbeu^n),
\end{equation}
and
\begin{equation}\label{e:p2}
\frac{1}{2}\dop\|\dbeu^n\|^2\leq (\dop \dbeu^n,\dbeu^n).
\end{equation}

\begin{lemma}\label{lem:2ed} For $n\ge 1,$ we have $\partial_\tau^{-\alpha}(\|\partial_\tau^{-1}\dbes\|^2)^n\le 
 C\,t_n^{1-\alpha}\partial_\tau^{-1}(\|\partial_\tau^{-\alpha} \dteu\|^2)^n\,.$
\end{lemma}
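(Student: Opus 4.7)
The plan is to mirror the proof of Lemma~\ref{lem:1e} in the convolution-quadrature setting, and then combine the resulting bound with a discrete counterpart of \eqref{step1tech} evaluated at $\nu=\alpha$. Continuously, the claim amounts to $\I^\alpha(\|\I\bes\|^2)(t)\le Ct^{1-\alpha}\I(\|\I^\alpha\teu\|^2)(t)$, which is exactly what \eqref{step1tech} with $\nu=\alpha$ yields once Lemma~\ref{lem:1e} is invoked; the discrete proof follows the same two-block structure.

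My first sub-goal is the discrete analog of Lemma~\ref{lem:1e}, namely $\partial_\tau^{-1}(\|\dopm\dbes\|^2)^n \le C\partial_\tau^{-1}(\|\dopm\dteu\|^2)^n$. Since $\dopm\dop=I$ (from $(1-\xi)^{\alpha}(1-\xi)^{-\alpha}=1$) and $\dbeu^0=0$, applying $\dopm$ to \eqref{sys2_a} and \eqref{sys2_b} gives $(\dbeu^n,v_h)+(\nabla\cdot\dopm\dbes^n,v_h)=0$ and $(\kappa^{-1}\dopm\dbes^n,\bv_h)-(\dopm\dbeu^n,\nabla\cdot\bv_h)=(\beta\dopm{\cal E}_u^n,\bv_h)$. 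Choosing $v_h=\dopm\dbeu^n$ and $\bv_h=\dopm\dbes^n$, adding, applying the Cauchy--Schwarz and Young inequalities, summing with weight $\tau$, and using \eqref{e:p1} to drop the nonnegative $(\dbeu,\dopm\dbeu)$-sum, I obtain $\partial_\tau^{-1}(\|\dopm\dbes\|^2)^n\le C\partial_\tau^{-1}(\|\dopm{\cal E}_u\|^2)^n$. Splitting ${\cal E}_u=\dteu+\dbeu$ leaves $\partial_\tau^{-1}(\|\dopm\dbeu\|^2)^n$ to control, which I do by repeating the argument on the \emph{twice}-$\dopm$-integrated equations with test functions $v_h=\dopm\dbeu^n$ and $\bv_h=\partial_\tau^{-2\alpha}\dbes^n$. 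This produces a bound of the form
\[
\partial_\tau^{-1}(\|\dopm\dbeu\|^2)^n \;\le\; C\,\partial_\tau^{-1}(\|\dopm\dteu\|^2)^n + C\,\partial_\tau^{-1}(\|\partial_\tau^{-2\alpha}\dbes\|^2)^n,
\]
and the last summand is fed back into $\partial_\tau^{-1}(\|\dopm\dbes\|^2)^n$ via a discrete counterpart of \eqref{eq: nu 1}; a weakly singular discrete Gronwall inequality (\cite[Theorem~6.1]{DixonMcKee}) then closes the bootstrap.

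The second sub-goal is the discrete version of \eqref{step1tech} at $\nu=\alpha$. The factorization $\partial_\tau^{-1}=\dopm\,\partial_\tau^{-(1-\alpha)}$ (from $(1-\xi)^{-1}=(1-\xi)^{-\alpha}(1-\xi)^{-(1-\alpha)}$), together with the nonnegativity of the weights $a_k^{(-\beta)}$ for $\beta\in(0,1)$, gives $\|\partial_\tau^{-1}\dbes^j\|\le\partial_\tau^{-(1-\alpha)}(\|\dopm\dbes\|)^j$. Discrete Cauchy--Schwarz combined with $\partial_\tau^{-(1-\alpha)}(1)^j\le Ct_j^{1-\alpha}$ (an elementary consequence of $a_k^{(-(1-\alpha))}\sim k^{-\alpha}/\Gamma(1-\alpha)$) yields $\|\partial_\tau^{-1}\dbes^j\|^2 \le Ct_j^{1-\alpha}\partial_\tau^{-(1-\alpha)}(\|\dopm\dbes\|^2)^j$. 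Applying $\dopm$, using $t_j\le t_n$ and nonnegativity of the $\dopm$-weights, and collapsing $\dopm\,\partial_\tau^{-(1-\alpha)}=\partial_\tau^{-1}$, I obtain
\[
\dopm(\|\partial_\tau^{-1}\dbes\|^2)^n \;\le\; C\,t_n^{1-\alpha}\,\partial_\tau^{-1}(\|\dopm\dbes\|^2)^n,
\]
and combining with the bound from the first sub-goal produces the claim.

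The hard part will be the bootstrap inside the first sub-goal: bringing $\partial_\tau^{-1}(\|\partial_\tau^{-2\alpha}\dbes\|^2)^n$ cleanly back to $\partial_\tau^{-1}(\|\dopm\dbes\|^2)^n$ requires a discrete counterpart of \eqref{eq: nu 1} and a weakly singular Gronwall inequality with constants that stay uniform as $\alpha\to 1$. A secondary technical point is the sharp partial-sum bound on the BECQ weights $a_k^{(-(1-\alpha))}$ needed in the second sub-goal to pull out the factor $t_n^{1-\alpha}$ exactly.
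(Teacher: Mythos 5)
Your proposal matches the paper's proof essentially step for step: the paper first derives the discrete analogue of \eqref{step1tech} (your second sub-goal, their \eqref{step1discrete}, obtained from the factorization $\partial_\tau^{-1}=\dopm\,\partial_\tau^{\alpha-1}$ plus Cauchy--Schwarz), and then proves $\partial_\tau^{-1}(\|\dopm\dbes\|^2)^n\le C\,\partial_\tau^{-1}(\|\dopm\dteu\|^2)^n$ using exactly the two test-function choices, the splitting ${\cal E}_u=\dbeu+\dteu$, and the $\partial_\tau^{-2\alpha}$ bootstrap closed by a weakly singular discrete Gronwall inequality that you describe. The only differences are cosmetic (the order of the two sub-goals) and the paper's explicit smallness condition $C a_0^{(-\alpha)}\tau^\alpha<1$ on the time step needed for the Gronwall step, which you should state.
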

\begin{proof} 
Using the Cauchy-Schwarz inequality, an appropriate simplification yields
 \[\|\partial_{\tau}^{-1}\dbes^j\|^2\le 
\big(\partial_{\tau}^{\alpha-1}(\|\partial_{\tau}^{-\alpha}\dbes\|)^j\big)^2
\le C t_n^{1-\alpha}\partial_{\tau}^{\alpha-1}(\|\partial_{\tau}^{-\alpha}\dbes\|^2)^j,\]
for $j\ge 1,$ and thus, 
\begin{equation}\label{step1discrete}
 \partial_{\tau}^{-\alpha}(\|\partial_{\tau}^{-1}\dbes\|^2)^n
\le Ct_n^{1-\alpha} \partial_{\tau}^{-\alpha}\big(\partial_{\tau}^{\alpha-1}(\|\partial_{\tau}^{-\alpha}\dbes\|^2)\big)^n
= Ct_n^{1-\alpha} \partial_{\tau}^{-1} (\|\partial_{\tau}^{-\alpha}\dbes\|^2)^n.
\end{equation}
Operating with $\partial_\tau^{-\alpha}$ on \eqref{sys2_a} and using $\dop\partial_\tau^{-\alpha}\dbeu^n=\dbeu^n$, we get 
\begin{equation} \label{eq: ndiscrete} 
(\dbeu^n,v_h)+(\nabla\cdot \partial_\tau^{-\alpha}\dbes^n,v_h)= 0.
\end{equation}
Choosing $v_h=\partial_\tau^{-\alpha} \dbeu^n$ in \eqref{eq: ndiscrete}, and $\bv_h = \partial_\tau^{-\alpha} \dbes^n$ in \eqref{sys2_b} after operating with $\partial_\tau^{-\alpha}$ both sides, then adding them up and applying the Cauchy-Schwarz inequality to obtain
\[(\dbeu^n, \partial_\tau^{-\alpha} \dbeu^n) +\|\sqrt{\kappa^{-1}} \partial_\tau^{-\alpha} \dbes^n\|^2 = (\beta \partial_\tau^{-\alpha} {\cal E}_u^n, \partial_\tau^{-\alpha} \dbes^n)
    \le C\|\partial_\tau^{-\alpha} {\cal E}_u^n\|^2 + \frac12\|\sqrt{\kappa^{-1}} \partial_\tau^{-\alpha} \dbes^n\|^2.\]
Canceling the common terms, summing up over $n$, multiplying by $\tau,$ and noting \eqref{e:p1}, we deduce
\[\partial_\tau^{-1}(\|\sqrt{\kappa^{-1}}\partial_\tau^{-\alpha}\dbes\|^2)^n \leq C\partial_\tau^{-1}(\|\partial_\tau^{-\alpha} {\cal E}_u\|^2)^n\,.\]
Recalling that ${\cal E}_u=\dbeu+\dteu$, and hence, with   $g_\bs^n=\partial_\tau^{-1}(\|\partial_\tau^{-\alpha} \dbes\|^2)^n$, $g_u^n=\partial_\tau^{-1}(\|\partial_\tau^{-\alpha} \dbeu\|^2)^n$ and $q_u^n=\partial_\tau^{-1}(\|\partial_{\tau}^{-\alpha}\dteu\|^2)^n$, we have 
\begin{equation}\label{bes estimatediscrete}
g_\bs^n \leq g_u^n+q_u^n\,.
\end{equation}
Operating with $\partial_\tau^{-\alpha}$ both sides of \eqref{sys2_b} and \eqref{eq: ndiscrete}, then choosing $v_h=\partial_\tau^{-\alpha}\dbeu^n$ and $\bv_h = \partial_\tau^{-2\alpha}\dbes^n$. Adding them up to obtain
\begin{multline*}
  \|\partial_\tau^{-\alpha} \dbeu^n\|^2 +(\kappa^{-1} \partial_\tau^{-\alpha}(\partial_\tau^{-\alpha}\dbes)^n, \partial_\tau^{-\alpha} \dbes^n) = (\beta \partial_\tau^{-\alpha} {\cal E}_u^n, \partial_\tau^{-2\alpha}\dbes^n)\\
\le \frac12\|\partial_\tau^{-\alpha} \dbeu^n\|^2+ 
\|\partial_\tau^{-\alpha} \dteu^n\|^2+C\|\partial_\tau^{-2\alpha}\dbes^n\|^2\,.
\end{multline*}
Canceling the common terms, summing over $n,$ and using \eqref{e:p1}, we reach 
\begin{equation}
\label{cd-1ndiscrete}
g_u^n \le q_u^n +C\partial_\tau^{-1}(\|\partial_\tau^{-2\alpha}\dbes\|^2)^n\,.\end{equation}
Using 
\[ \partial_\tau^{-1}(\|\partial_\tau^{-2\alpha} \dbes\|^2)^n =
\partial_\tau^{-1}(\|\partial_\tau^{-\alpha}(\partial_\tau^{-\alpha}\dbes)\|^2)^n
\le Ct_n^\alpha \partial_\tau^{-1}(\partial_\tau^{-\alpha}(\|\partial_\tau^{-\alpha} \dbes\|^2))^n= Ct_n^\alpha \partial_\tau^{-\alpha}g_\bs^n,\]
and the estimates in \eqref{cd-1ndiscrete} and \eqref{bes estimatediscrete}, we deduce that $g_\bs^n \le Cq_u^n +Ct_n^\alpha \partial_\tau^{-\alpha}g_\bs^n.$ 
Then, if the time step $\tau$ is such that $C a_0^{(-\alpha)} \tau^\alpha <1,$ an application of a weakly singular discrete Gronwall inequality yields $g_\bs^n
\le C\,q_u^n\,.$ Inserting this in \eqref{step1discrete} and the proof is completed. 
\end{proof}

In the next lemma, we derive an upper bound for $\dbeu^n$ that will be used later to show the $L^2(\Omega)$-norm convergence rate for ${\cal E}_u^n$. For brevity, for a given function $f,$ we set $(t f)^n=t_nf(t_n)=t_n f^n.$
\begin{lemma}\label{lem:2ee} 
For $n\ge 1$, we have
\[\|t_n\dbeu^n\|^2 \leq C\max_{1\le j\le n} \Big(\dopm(\|(t\dteu)^j\|^2)+t_{j-1}^{1-\alpha}\partial_\tau^{-1}(\|(\partial_\tau^{-\alpha} \dteu)\|^2)^{j-1}\Big)\,.\]
\end{lemma}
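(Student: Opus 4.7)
The approach is to discretize the continuous argument of Lemma~\ref{lem:2e}, relying on the BECQ tools already established and on Lemma~\ref{lem:2ed}. I foresee four stages.

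\emph{Stage~1 (weighted equations).} Multiply \eqref{sys2_a} and \eqref{sys2_b} by $t_n$. The key tool is a discrete analogue of the generalized Leibniz formula \eqref{Leibniz-2}: for BECQ one expects an identity of the shape
\[
\dop(t\dbeu)^n = t_n\,\dop\dbeu^n + \alpha\,\dopm\dbeu^n + R^n,
\]
with a controllable commutator defect $R^n$, while the initial-value term in \eqref{Leibniz-2} is absent because $\dbeu^0 = 0$. This yields the discrete counterparts of \eqref{aa-n}--\eqref{bb-n}, with $\alpha(\dopm\dbeu^n,v_h)$ on the right of the first equation and $(\beta\,t_n{\cal E}_u^n,\bv_h)$ on the right of the second.

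\emph{Stage~2 (energy identity).} Test the resulting relations with $v_h = t_n\dbeu^n$ and $\bv_h = t_n\dbes^n$, add, and apply \eqref{e:p2}, to obtain
\[
\tfrac12\,\dop(\|t\dbeu\|^2)^n + \|\sqrt{\kappa^{-1}}\,t_n\dbes^n\|^2 \le (\beta\,t_n {\cal E}_u^n, t_n\dbes^n) + \alpha(\dopm\dbeu^n, t_n\dbeu^n) + (R^n,t_n\dbeu^n).
\]
The cross term $(\dopm\dbeu^n, t_n\dbeu^n)$ is recast via the discrete counterpart of \eqref{eq: 3-1}: apply $\dopm$ to \eqref{sys2_a}, test with $v_h = t_n\dbeu^n$, and pair with \eqref{sys2_b} (operated by $\dopm$) tested against $\bv_h = t_n\dbes^n$ so that the divergence terms cancel. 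This expresses $(\dopm\dbeu^n, t_n\dbeu^n)$ linearly in $(\kappa^{-1}t_n\dbes^n, \dopm\dbes^n)$ and $(\beta\,t_n{\cal E}_u^n, \dopm\dbes^n)$.

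\emph{Stage~3 (discrete fractional integration).} Substituting, using Young's inequality to absorb $\|\sqrt{\kappa^{-1}}\,t_n\dbes^n\|^2$ on the left, and decomposing ${\cal E}_u = \dbeu + \dteu$ reduces the inequality to
\[
\dop(\|t\dbeu\|^2)^n \le C\|t_n\dteu^n\|^2 + C\|t_n\dbeu^n\|^2 + C\|\dopm\dbes^n\|^2.
\]
Applying $\dopm$ to both sides (with $\dbeu^0 = 0$) and invoking Lemma~\ref{lem:2ed} together with a discrete analogue of \eqref{step1tech}, the term $\dopm(\|\dopm\dbes\|^2)^n$ is bounded by $C\,t_{n-1}^{1-\alpha}\partial_\tau^{-1}(\|\partial_\tau^{-\alpha}\dteu\|^2)^{n-1}$, giving
\[
\|t_n\dbeu^n\|^2 \le C\dopm(\|t\dteu\|^2)^n + C\,t_{n-1}^{1-\alpha}\partial_\tau^{-1}(\|\partial_\tau^{-\alpha}\dteu\|^2)^{n-1} + C\dopm(\|t\dbeu\|^2)^n.
\]

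\emph{Stage~4 (discrete Gronwall).} Under a mild step-size restriction of the form $Ca_0^{(-\alpha)}\tau^\alpha < 1$, the weakly singular discrete Gronwall inequality \cite[Theorem~6.1]{DixonMcKee} absorbs $\dopm(\|t\dbeu\|^2)^n$, and taking the maximum over $1 \le j \le n$ of the data on the right-hand side delivers the asserted bound.

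The main obstacle will be establishing a sufficiently clean discrete Leibniz identity at Stage~1, since BECQ does not satisfy the product rule exactly; the commutator residual $R^n$ must either vanish identically or be dominated by the structural terms already appearing on the right-hand side, possibly at the cost of absorbing it after a second application of $\dopm$. A secondary subtle point is the index shift that produces the factor $t_{j-1}^{1-\alpha}$ rather than $t_j^{1-\alpha}$ in the final bound, which should emerge from the summation-by-parts structure hidden in $\partial_\tau^{-1}(\partial_\tau^{\alpha-1}\cdot)^n$ when Lemma~\ref{lem:2ed} is invoked.
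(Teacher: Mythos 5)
Your overall strategy is the paper's: weight the error equations by $t_n$, invoke a discrete Leibniz rule, recast the resulting cross term through an integrated form of \eqref{sys2_a} so that the divergence terms cancel, then apply $\dopm$, Lemma~\ref{lem:2ed} and a weakly singular discrete Gronwall inequality. However, the specific discrete Leibniz identity you posit in Stage~1 is wrong, and the error propagates through Stages~2 and~3. The correct identity, \eqref{formula} (quoted from \cite[(5.15)]{KM-2022}), is \emph{exact} --- there is no commutator residual $R^n$ to control --- and reads $t_n\dop U_h^n=\dop((t U_h)^n)-\alpha\mdop U_h^{n-1}$. The lower-order term is the discrete Riemann--Liouville integral of order $1-\alpha$, namely $\mdop=\partial_\tau^{\alpha-1}$, evaluated at level $n-1$, mirroring the term $\alpha\I^{1-\alpha}\varphi$ in \eqref{Leibniz-2}; you instead wrote $\alpha\,\dopm\dbeu^{n}=\alpha\,\partial_\tau^{-\alpha}\dbeu^{n}$, the integral of order $\alpha$ at level $n$. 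These coincide only when $\alpha=1/2$.

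This matters downstream. In the paper the cross term $(\mdop\dbeu^{n-1},t_n\dbeu^n)$ is recast by applying $\partial_\tau^{-1}$ (not $\dopm$) to \eqref{sys2_a} at level $n-1$ and pairing with \eqref{sys2_b} tested against $\bv_h=t_n\partial_\tau^{-1}\dbes^{n-1}$; the divergence terms then cancel and the energy inequality closes with $C\|\partial_\tau^{-1}\dbes^{n-1}\|^2$ on the right-hand side --- which is precisely the quantity Lemma~\ref{lem:2ed} controls, via $\partial_\tau^{-\alpha}(\|\partial_\tau^{-1}\dbes\|^2)^n\le Ct_n^{1-\alpha}\partial_\tau^{-1}(\|\partial_\tau^{-\alpha}\dteu\|^2)^n$. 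Your chain instead produces $\dopm\bigl(\|\dopm\dbes\|^2\bigr)^n$, which Lemma~\ref{lem:2ed} does not bound; you would need an additional conversion (a discrete analogue of \eqref{eq: nu mu} together with the intermediate bounds established inside the proof of Lemma~\ref{lem:2ed}) to handle it. Moreover, your pairing ``$\bv_h=t_n\dbes^n$ in \eqref{sys2_b} operated by $\dopm$'' does not make the divergence terms cancel: cancellation requires the test flux in \eqref{sys2_b} to equal the integrated flux appearing in the first equation. Once \eqref{formula} is used in its exact form and the cross term is treated at level $n-1$ with $\partial_\tau^{-1}$, both of the ``obstacles'' you flag at the end --- the residual and the index shift producing $t_{j-1}^{1-\alpha}$ --- resolve themselves, and the remainder of your Stages~3--4 (Young's inequality, the decomposition ${\cal E}_u=\dbeu+\dteu$, the property \eqref{e:p2}, application of $\dopm$, and the discrete Gronwall step under $Ca_0^{(-\alpha)}\tau^{\alpha}<1$) is exactly the paper's argument.
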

\begin{proof} 
Multiply both sides of \eqref{sys2_a} and \eqref{sys2_b} by $t_n$ and use the identity
\begin{equation}\label{formula}
t_n\dop U_h^n=\dop((t U_h)^n)-\alpha\partial_\tau^{\alpha-1}U_h^{n-1},\quad n\geq 1,
\end{equation}
see \cite[(5.15)]{KM-2022}, we get
\begin{eqnarray}\label{aa-nd}
(\dop (t\dbeu)^n, v_h)+ (\nabla\cdot (t\dbes)^n, v_h) &=& \alpha(\partial_\tau^{\alpha-1}\dbeu^{{n-1}},v_h), \\
\label{bb-nd}
(\kappa^{-1} (t\dbes)^n, \bv_h) - (t_n\dbeu^n,\nabla\cdot \bv_h ) &=& (\beta (t_n {\cal E}_u^n),\bv_h).
\end{eqnarray}
Choose $v_h=t_n\dbeu^n$ and $\bv_h=t_n\dbes^n$ to get
\begin{equation}\label{ab:nd}
(\dop(t \dbeu)^n,(t \dbeu)^n)+\|\sqrt{\kappa^{-1}}(t_n\dbes^n)\|^2 =
(\beta(t_n{\cal E}_u^n), t_n\dbes^n)+\alpha (\mdop\dbeu^{{n-1}}, t_n\dbeu^n).
\end{equation}
Now, {consider \eqref{sys2_a} at the time level $t_{n-1}$}, apply $\partial_\tau^{-1}$ to both sides of the equation followed by choosing $v_h=t_n\dbeu^n$ then choosing $\bv_h=t_n\partial_\tau^{-1}\dbes^{{n-1}}$ in \eqref{sys2_b}
and adding up yields
$$(\mdop \dbeu^{{n-1}},t_n\dbeu^n)= -(\kappa^{-1} t_n\dbes^n, \partial_\tau^{-1}\dbes^{{n-1}})
+\alpha (\beta t_n{\cal E}_u^n,\partial_\tau^{-1}\dbes^{{n-1}}).$$
Substituting the new expression for $(\mdop \dbeu^{n-1},t_n\dbeu^n)$ in \eqref{ab:nd} and using the property \eqref{e:p2}, we deduce that
\[    \dop (\|(t\dbeu)^n\|^2)+2\|\sqrt{\kappa^{-1}}(t_n\dbes^n)\|^2 \leq
C\|t_n{\cal E}_u^n\|^2+\|\sqrt{\kappa^{-1}}(t_n\dbes^n)\|^2+
C{\|\partial_\tau^{-1}\dbes^{n-1}\|^2}.\]
Cancelling similar terms, applying $\dopm$ and using the estimate in Lemma~\ref{lem:2ed}, 
\[\|t_n\dbeu^n\|^2 \leq C\dopm(\|(t\dbeu)^n\|^2)+C\max_{1\le j\le n} \Big(\dopm(\|(t\dteu)^j\|^2)+t_{j-1}^{1-\alpha}\partial_\tau^{-1}(\|(\partial_\tau^{-\alpha} \dteu)\|^2)^{j-1}\Big).
\]
If the time step $\tau$ satisfies $Ca_0^{(-\alpha)}\tau^{\alpha}<1$, then an application of a weakly singular discrete Gronwall inequality yields
the desired estimate in the lemma.
\end{proof}

We are ready to show optimal error estimates for the fully discrete solution.
\begin{theorem} \label{thm:mixed-smd}
 Let $(u,\bs)$ be the solutions of \eqref{w1-m}-\eqref{w2-m}. Let $(U_h^n, \Sigma_h^n)$ be the solution of \eqref{w1a-BE}-\eqref{w2a-BE} with $u_{0h}=P_hu_0$. 
 Then, for a small time step $\tau$, we have
\begin{equation}\label{es1n}
\|U_h^n-u(t_n)\|\leq
 C (\tau t_n^{\alpha\delta/2-1} + h^2 t_n^{-\alpha(2-\delta)/2})\|u_0\|_{\dot H^\delta(\Omega)},\quad \delta \in [0,2].
\end{equation}
\end{theorem}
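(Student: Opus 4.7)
The plan is to decompose the error using the intermediate solution:
\[U_h^n - u(t_n) = \dbeu^n + \dteu^n + (u_h(t_n)-u(t_n)),\]
and bound each piece separately. The third piece is controlled by the semidiscrete bound \eqref{es1-n} of Theorem~\ref{thm:mixed-sm}, producing the term $C h^2 t_n^{-\alpha(2-\delta)/2}\|u_0\|_{\dot H^\delta(\Omega)}$. The second piece $\dteu^n = \bar U_h^n - u_h(t_n)$ is controlled by Theorem~\ref{thm:mixed-sm-f} and already contributes both the desired $\tau$ and $h^2$ terms. Only the first piece $\dbeu^n$ requires fresh work, and this is where Lemmas~\ref{lem:2ed} and \ref{lem:2ee} are needed.

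To bound $\|\dbeu^n\|$ I invoke Lemma~\ref{lem:2ee}, which reduces the task to controlling the two discrete fractional expressions of $\dteu$ appearing on its right-hand side. Inserting the pointwise bound $\|\dteu^j\|\le C(\tau t_j^{\alpha\delta/2-1}+h^2 t_j^{-\alpha(2-\delta)/2})\|u_0\|_{\dot H^\delta(\Omega)}$ from Theorem~\ref{thm:mixed-sm-f}, I first compute
\[\|(t\dteu)^j\|^2\le C(\tau^2 t_j^{\alpha\delta}+h^4 t_j^{-\alpha(2-\delta)+2})\|u_0\|_{\dot H^\delta(\Omega)}^2,\]
and then apply $\dopm$. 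Using the discrete analog of the continuous identity $\I^\alpha(s^\beta)(t)=\frac{\Gamma(\beta+1)}{\Gamma(\beta+\alpha+1)}t^{\beta+\alpha}$, I obtain
\[\dopm(\|t\dteu\|^2)^j\le C(\tau^2 t_j^{\alpha(\delta+1)}+h^4 t_j^{-\alpha(1-\delta)+2})\|u_0\|_{\dot H^\delta(\Omega)}^2.\]
Dividing by $t_n^2$ and taking square roots produces
\[\|\dbeu^n\|\le C(\tau t_n^{\alpha(\delta+1)/2-1}+h^2 t_n^{-\alpha(1-\delta)/2})\|u_0\|_{\dot H^\delta(\Omega)}.\]
The exponents differ from those in \eqref{es1n} by an extra factor $t_n^{\alpha/2}\le T^{\alpha/2}$, which is absorbed into the constant.

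For the second contribution in Lemma~\ref{lem:2ee}, I use the positivity of the weights $b_k^{(\alpha)}$ together with a discrete Cauchy--Schwarz inequality to obtain $\|\partial_\tau^{-\alpha}\dteu^{j-1}\|^2\le Ct_{j-1}^\alpha \partial_\tau^{-\alpha}(\|\dteu\|^2)^{j-1}$, and then sum once more to evaluate $\partial_\tau^{-1}$. A direct calculation, exploiting $\dteu^0=0$ to absorb the initial singularity, shows this contribution is no worse than the first. Combining the three pieces via the triangle inequality delivers \eqref{es1n}.

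The main obstacle is careful handling of the discrete fractional convolutions near $t=0$: the bound on $\|\dteu^j\|$ is mildly singular in $t_j$, and its square is not integrable in the continuous sense when $\alpha\delta<1$, so the discrete sums must be estimated directly (exploiting $\dteu^0=0$) rather than by passing to divergent continuous integrals. A secondary subtlety is that the Gronwall reduction embedded in Lemma~\ref{lem:2ee} is only valid under a smallness condition on $\tau$, which explains the hypothesis of a small time step in the theorem statement.
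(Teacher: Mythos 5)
Your argument coincides with the paper's proof: the same splitting $U_h^n-u(t_n)=\dbeu^n+\dteu^n+(u_h(t_n)-u(t_n))$, the same use of the first estimate of Theorem~\ref{thm:mixed-sm-f} together with $\dopm(t_n^\beta)\leq Ct_n^{\alpha+\beta}$ to bound the two terms on the right of Lemma~\ref{lem:2ee}, yielding $\|t_n\dbeu^n\|^2\leq C(\tau^2t_n^{\alpha(\delta+1)}+h^4t_n^{-\alpha(1-\delta)+2})\|u_0\|_{\dot H^\delta(\Omega)}^2$, and the same absorption of the resulting extra factor $t_n^{\alpha/2}$ into the constant before concluding by the triangle inequality. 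No substantive difference from the paper's route.
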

\begin{proof}
 Using the first estimate in Theorem \ref{thm:mixed-sm-f} and the fact that 
 $\dopm(t_n^\beta)\leq C t_n^{\alpha+\beta}$ for $\beta>-1$, we find that 
$$
\dopm(\|(t\dteu)^n\|^2)\leq C\left(\tau^2 t_n^{\alpha(\delta+1)} + h^4 t_n^{-\alpha(1-\delta)+2}\right),
$$
and
$$
\partial_\tau^{-1}(\|\partial_\tau^{-\alpha} \dteu\|^2)^n\leq C\left(\tau^2 t_n^{\alpha(\delta+2)-1} + h^4 t_n^{\alpha\delta+1}\right),
$$
for $\delta \in [0,2]$. Then, by Lemma \ref{lem:2ee}, we see that 
\begin{equation}\label{t1-a}
\|t_n\dbeu^n\|^2\leq C\left(\tau^2 t_n^{\alpha(\delta+1)} + h^4 
t_n^{-\alpha(1-\delta)+2}\right).
\end{equation}
The desired estimate \eqref{es1n} follows now by using the triangle inequality and the first estimate in Theorem \ref{thm:mixed-sm-f}.
\end{proof}
\begin{remark}
To show the estimate for $\dbes^n$, we operate $\dop$ on \eqref{bb-nd} so that
\begin{equation*}\label{cc-nd}
(\kappa^{-1} \dop(t\dbes)^n, \bv_h) - (\dop(t\dbeu)^n,\nabla\cdot \bv_h )= (\beta \dop(t{\cal E}_u)^n,\bv_h).
\end{equation*}
Next, we choose $\bv_h=t_n\dbes^n$ and $v_h=\dop(t\dbeu)^n$ in \eqref{aa-nd} to arrive at
\begin{multline*}
 \|\dop (t\dbeu)^n\|^2 +\frac{1}{2} \dop\|\sqrt{\kappa^{-1}} t_n\dbes^n\|^2 \\ \leq 
 C\left(\| \dop (t \dteu)^n\|^2+\|\partial_\tau^{\alpha-1}\dbeu^{{n-1}}\|^2
 +\|\sqrt{\kappa^{-1}} t_n\dbes^n\|^2\right) +\frac{1}{2}\|\dop (t\dbeu)^n)\|^2.
\end{multline*}
Canceling the similar terms, then applying $\partial_\tau^{-\alpha}$ to both sides, 
\begin{multline*}
\|\sqrt{\kappa^{-1}} t_n\dbes^n\|^2 \leq 
 C\max_{1\le j\le n} \Big(\dopm\Big(\| \dop (t \dteu)\|^2\Big)^j+\dopm\Big(\|\partial_\tau^{\alpha-1}\dbeu\|^2\Big)^{j-1}\Big)\\
+C\dopm\left(\|\sqrt{\kappa^{-1}} (t\dbes)^n\|^2\right)\,.
\end{multline*}
Thus, if the time step $\tau$ is such that $Ca_0^{(-\alpha)}\tau^{\alpha}<1$, then an application of a weakly singular discrete Gronwall inequality  yields 
\[t_n^2\|\sqrt{\kappa}\dbes^n\|^2\leq C\,\max_{1\le j\le n} \Big(\dopm\Big(\| \dop (t \dteu)\|^2\Big)^j+\dopm\Big(\|\partial_\tau^{\alpha-1}\dbeu\|^2\Big)^{j-1}\Big),\quad n\geq 1.\]
The second term $\|\mdop\dbeu^{j-1}\|$ on the right-hand side can be readily bounded using the estimate \eqref{t1-a}. However, deriving an appropriate bound for the first term $\dopm\left(\| \dop (t\dteu)\|\right)^j$ which preserves the convergence rate seems to be a challenging task.

\end{remark}

\section{Numerical experiments}\label{Numerical} 
In this section, we present numerical tests to validate the theoretical predictions discussed in the convergence theorems (Theorems \ref{thm:mixed-sm} and \ref{thm:mixed-smd}) for different choices of the fractional order $\alpha$. In the model problem \eqref{main}, we choose $T=0.5$, the spatial domain $\Omega$ is a unit square, $\kappa=1$, and the driving vector force  $\bF(x,y)=\Big[\begin{matrix} &x\\ &y\end{matrix}\Big]$.  We consider three numerical tests with smooth and nonsmooth initial data $u_0$.

The calculations are performed on a uniform (spatial) triangular grid. The construction of the grid involves dividing the unit square into two triangles by drawing a diagonal from (0, 0) to (1, 1). Then, a sequence of red refinements is applied to generate a sequence of shape-regular grids.
For the mixed FEM, we perform the computation using the Raviart-Thomas finite elements $RT0/P0$ and $RT1/P1^{\rm disc}$, where $P0$ and $P1^{\rm disc}$ denote the sets of piecewise constant and linear functions, respectively.
All computations are performed using the finite element package written in Julia, which provides the necessary tools for finite element analysis. \\

{\bf Example 1.} We consider a test problem with the following initial data
\begin{align*}
	u_0(x,y)=x(1-x)y(1-y)\,.
\end{align*}
Since the exact solution is not available, we compute a reference solution $u^*$ on the refinement level 6, representing a $2^7 \times 2^7=128\times 128$ spatial grid. To examine the spatial discretization error, we choose a (fixed) small  $\tau$ so that the error incurred by the spatial discretization is dominant. We compute the $L^2(\Omega)$-norm of the errors at the final time $T$. The numerical results are given in Table~\ref{tab:ref_rt0} for the case of the lowest-order Raviart-Thomas finite elements $RT_0/P_0$ for two different values of $\alpha$; $\alpha=0.3$ and $\alpha=0.7$. The table shows a convergence rate $O(h)$ for both the scalar variable $u$ and the flux variable $\bs$, which is consistent with our observation in Remark~\ref{remark:P0}. For the finite elements $RT_1/P_1^{\rm disc}$, the results are presented in Table~\ref{tab:ref_rt1}. We observe $O(h^2)$-rates for both variables $u$ and $\bs$. As the initial data $u_0\in \dot H^{5/2-\epsilon}(\Omega)$ for $\epsilon>0$, the numerical results agree with the achieved theoretical estimates in Theorem~\ref{thm:mixed-sm}.

\begin{table}[htb!!]
	\begin{center}
  \begin{tabular}{|c|cc|cc|cc|cc|}
 \hline
 & \multicolumn{4}{|c}{$\alpha=0.3$} &\multicolumn{4}{|c|}{$\alpha=0.7$}\\
 \hline
level&$\|u^*-u_h\|$&OC&$\|\bs^*-\bs_h\|$&OC
&$\|u^*-u_h\|$&OC&$\|\bs^*-\bs_h\|$&OC\\
\hline
      1&1.074e-02&     &4.389e-02&   &1.724e-03&     &5.984e-03& \\
      2&4.315e-03&1.32&1.684e-02&1.38&8.772e-04&0.97 &2.311e-03&1.37\\
      3&2.144e-03&1.01&8.325e-03&1.07&4.388e-04&0.99 &1.146e-03&1.01\\
      4&1.044e-03&1.04&4.105e-03&1.02&2.230e-04&0.98 &5.659e-04&1.02\\
      5&5.299e-04&0.98&2.011e-03&1.03&1.101e-04&1.02 &2.781e-04&1.02\\
			\hline
		\end{tabular}
 \caption{$L^2$-errors and spatial order of convergence  (OC)  for Example 1 using $RT_0/P_0$.  
     }\label{tab:ref_rt0}
     \end{center}
\end{table}

\begin{table}[htb!!]
	\begin{center}
			\begin{tabular}{|c|cc|cc|cc|cc|}
   \hline
 & \multicolumn{4}{|c}{$\alpha=0.3$} &\multicolumn{4}{|c|}{$\alpha=0.7$}\\
 \hline
    level&$\|u^*-u_h\|$&OC&$\|\bs^*-\bs_h\|$&OC
    &$\|u^*-u_h\|$&OC&$\|\bs^*-\bs_h\|$&OC\\
    \hline
       1&1.059e-03&    &4.906e-03&    &6.977e-04&    &5.437e-04&  \\
       2&2.118e-04&2.32&1.222e-03&2.01&1.138e-04&2.62&1.085e-04&2.32\\
       3&5.378e-05&1.98&2.298e-04&2.41&2.617e-05&2.12&2.765e-05&1.97\\
       4&1.323e-05&2.02&5.705e-05&2.01&6.602e-06&1.99&6.828e-06&2.02\\
			\hline
		\end{tabular}
	    \caption{$L^2$-errors and spatial order of convergence  (OC) for Example 1 using $RT_1/P_1^{\rm dc}$
    .}\label{tab:ref_rt1}
    \end{center}
\end{table}

{\bf Example 2 (non-homogeneous).} In this example, the initial data $u_0$ is a two-dimensional continuous piecewise linear function (having a hat shape) such that $u_0(1/2,1/2)=1/2$ and $u_0=0$ at the four corners of $\Omega.$ So,  $u_0\in \dot H^{3/2-\epsilon}(\Omega)$ for $\epsilon>0$. We choose a source term $g$, that is the right-hand side of \eqref{main} is $g$ instead of $0$ in this case,  so that the exact solution is 
\begin{equation*}\label{eq: u series}
  u(x,y,t)=16\sum_{m,n=0}^\infty (-1)^{mn}(\lambda_m\lambda_n)^{-2}\sin( \lambda_m x)\sin( \lambda_n y) E_{\alpha,1}(-\lambda_{mn} t^{\alpha}),
\end{equation*}
where $E_{\mu,\beta}(t):=\sum_{p=0}^\infty\frac{t^p}{\Gamma(\mu p+\beta)}$ is the Mittag-Leffler function, with parameters $\mu,\beta >0,$ $\lambda_m= (2m+1)\pi$, and $\lambda_{mn}=[(2m+1)^2+(2n+1)^2]\pi^2.$ In this case, 
\[ g(x,y,t)=16t^{\alpha-1} \sum_{m,n=0}^\infty (-1)^{mn}(\lambda_m\lambda_n)^{-2}
 \varphi_{mn}(x,y) E_{\alpha,\alpha}(-\lambda_{mn} t^{\alpha}),\]
where 
\[\varphi_{mn}(x,y)=\Big(\lambda_m x \cos( \lambda_m x)+2\sin( \lambda_m x)\Big)\sin( \lambda_n y)  +\lambda_n y\, \sin( \lambda_m x) \cos( \lambda_n y)\,.\]
To find the errors in space and time, we use the standard strategy which is to consider the time step size small enough to find the convergence order in space and vice versa. In order to keep the error contribution from time small, we set the time step length $\tau=1/1200$. Table~\ref{tab-k2} shows the errors and convergence rates in space for $RT_1/P_1^{\rm disc}$ elements. The expected order of convergence $\mathcal{O}(h^2)$ is obtained for the scalar variable $u$ and the corresponding flux $\bs$. For completeness, we present in Figure~\ref{fig:1} the errors computed using the $RT_0/P_0$ and $RT_1/P_1^{\rm disc} $ elements for a fixed $\alpha=0.5$ and different refinement levels.

To illustrate the convergence rate in time, we excluded the error in space. The numerical tests are performed with $RT_1/P_1^{\rm disc}$ and a spatial mesh consists of $128\times 128$ cells. The calculations were done with time step lengths $\tau=0.5/N$ for different choices of $N$. In Table~\ref{tab-k2_time}, the errors and convergence orders are listed for the scalar variable $u$. A first-order convergence rate is observed, which aligns with theoretical results in Theorem~\ref{thm:mixed-smd}.
\begin{table}[htb!!]
	\begin{center}
		\begin{tabular}{|c|cc|cc|cc|cc|}
\hline
 & \multicolumn{4}{|c}{$\alpha=0.3$} &\multicolumn{4}{|c|}{$\alpha=0.7$}\\
 \hline
level&$\|u-u_h\|$&OC&$\|\bs-\bs_h\|$&OC
&$\|u-u_h\|$&OC&$\|\bs-\bs_h\|$&OC\\
\hline
1&3.704e-03&     &1.565e-02&     & 2.256e-03&     &8.893e-03&    \\
2&9.715e-04&1.931&3.865e-03&2.018& 5.255e-04&2.101&2.192e-03&2.020 \\
3&2.147e-04&2.178&9.667e-04&2.000& 1.286e-04&2.031&4.853e-04&2.175 \\
4&5.317e-05&2.013&2.394e-04&2.014& 3.140e-05&2.033&1.194e-04&2.023 \\
5&1.437e-05&1.887&6.651e-05&1.848& 8.263e-06&1.926&3.061e-05&1.963 \\
 \hline
		\end{tabular}
 \caption{
 $L^2$-errors and spatial order of convergence (OC) for Example 2 using $RT_1/P_1^{\rm dc}$.}
 \label{tab-k2}
	\end{center}
\end{table}

	\begin{figure}[htb!]
		\begin{center}
			\includegraphics[width=0.48\textwidth]{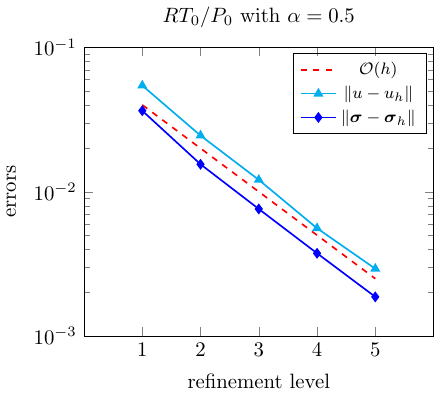}
            \includegraphics[width=0.48\textwidth]{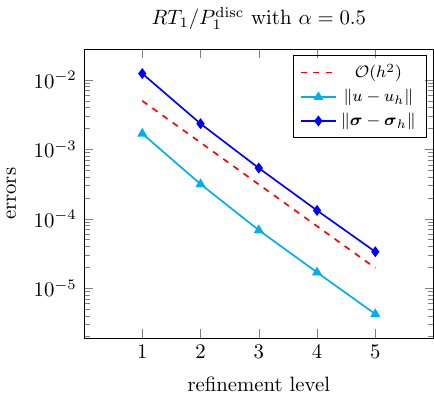}
		\end{center}
	    \caption{$L^2$-errors for Example 2 using $RT_0/P_0$ (left) and $RT_1/P_1^{\rm disc} (right)$ for fixed $\alpha=0.5$.}
     \label{fig:1}
	\end{figure}

 \begin{table}[htb!!]
 	\begin{center}
 		\begin{tabular}{|c|cc|cc|}
   \hline
 & \multicolumn{2}{|c}{$\alpha=0.3$} &\multicolumn{2}{|c|}{$\alpha=0.7$}\\
\hline
 $N$&$\|u-u_h\|$&OC&$\|u-u_h\|$&OC\\ 
 \hline                                                 
 10 & 3.1744041e-02 &    & 1.679786e-02&   \\
 20 & 1.7003601e-02 &0.901 & 7.812162e-03&1.105\\
 40 & 8.0542173e-03 &1.078 & 3.287313e-03&1.249\\
 80 & 4.3481248e-03 &0.889 & 1.595068e-03&1.043\\
 160 & 2.1347824e-03 &1.026 & 8.042011e-04&0.988\\
 \hline
 \end{tabular}
 \caption{
 $L^2$-errors and temporal order of convergence (OC)  for Example 2
.}\label{tab-k2_time}
 \end{center}
 \end{table}

{\bf Example 3 (non-homogeneous less smooth).} We choose the initial data $u_0$ to be a two-dimensional piecewise constant (discontinuous) function such that $u_0(x,y)=1$ for $1/4\le x,y\le 3/4$, and zero elsewhere. So, 
$u_0\in \dot H^{1/2-\varepsilon}(\Omega)$, for $\varepsilon>0$. We choose a source term $g$ and boundary conditions such that the exact solution is 
\begin{equation*}\label{eq: u series 3-k}
  u(x,y,t)=8\sum_{m,n=0}^\infty (-1)^{\lceil m/2 \rceil \lceil n/2 \rceil} (\lambda_m\lambda_n)^{-1}\sin( \lambda_m x)\sin( \lambda_n y) E_{\alpha,1}(-\lambda_{mn} t^{\alpha})\,.
\end{equation*}
The source term in that case is
\[ g(x,y,t) = 8t^{\alpha-1} \sum_{m,n=0}^\infty(-1)^{\lceil m/2 \rceil \lceil n/2 \rceil} (\lambda_m\lambda_n)^{-1}
 \varphi_{mn}(x,y) E_{\alpha,\alpha}(-\lambda_{mn} t^{\alpha}).\]

 \begin{table}[htb!!]
	\begin{center}
 		\begin{tabular}{|c|cc|cc|cc|cc|}
\hline
 & \multicolumn{4}{|c}{$\alpha=0.3$} &\multicolumn{4}{|c|}{$\alpha=0.7$}\\
 \hline
level&$\|u-u_h\|$&OC&$\|\bs-\bs_h\|$&OC
&$\|u-u_h\|$&OC&$\|\bs-\bs_h\|$&OC\\
\hline
1&5.587e-03&      &4.003e-02&         &2.493e-03&    &1.784e-02&   \\
2&1.311e-03&2.091 &1.084e-02& 1.885 &6.549e-04&1.928 &5.182e-03&1.783\\
3&3.019e-04&2.119 &3.219e-03& 1.752 &1.570e-04&2.061 &1.626e-03&1.672\\
4&7.956e-05&1.924 &9.967e-04& 1.691 &3.932e-05&1.998 &5.356e-04&1.603\\
 \hline
		\end{tabular}
 \caption{
 $L^2$-errors and temporal order of convergence (OC) for Example 3 using $RT_1/P_1^{\rm dc}$ .}\label{tab-k3:rt1}
\end{center}
\end{table}

This example studies the convergence order with respect to space. We fix 
the time step length $\tau=6.25\times 10^{-4}$ and perform the simulations for the finite element 
$RT_1/P_1^{\rm disc}$ on different refinement levels. The errors and the convergence orders using $\alpha=0.3$ and $\alpha=0.7$ are presented in Table~\ref{tab-k3:rt1}. One can observe the predicted convergence order $\mathcal{O}(h^2)$ for the errors estimated in Theorems~\ref{thm:mixed-sm} and \ref{thm:mixed-smd} for the scalar variable $u$. 
By contrast, the convergence rate is of order $\mathcal{O}(h^{1.6})$ for the flux variable $\bs$, which may be seen as expected results. However, as the initial data $u_0\in \dot H^{1/2-\varepsilon}(\Omega)$ for $\varepsilon>0$, the numerical results show a positive effect on the convergence rate.
Indeed, by \eqref{es2-n}-\eqref{es3-n}, and interpolation between $\dot H^{1}(\Omega)$ and $L^2(\Omega)$, it follows that
$$
\|\bs-\bs_h\|\leq Ch^{3/2-\varepsilon}t^{-\alpha(2+\varepsilon)/2}\|u_0\|_{\dot H^{1/2-\varepsilon}(\Omega)},
$$ confirming our theoretical estimates.

\end{document}